 \newtheorem{thm}{Theorem}[section]
 \newtheorem{lem}[thm]{Lemma}
 \theoremstyle{definition}
 \newtheorem{defn}[thm]{Definition}
 \theoremstyle{remark}
 \newtheorem{rem}[thm]{Remark}
 \newtheorem*{ex}{Example}
 \numberwithin{equation}{section}
\newcommand{\clifford}{\mathcal{C}\ell}
\begin{document}

%
%
%
%
%
%
%
%
%

\title[A Clifford algebraic Approach to Line Geometry]{A Clifford algebraic Approach to\\Line Geometry}

\author[Daniel Klawitter]{Daniel Klawitter}

\address{%
Dresden University of Technology\\
Department of Mathematics\\
Institute of Geometry\\
Zellescher Weg 12 - 14\\
01062 Dresden\\
Germany}

\email{daniel.klawitter@tu-dresden.de}

\subjclass{15A66, 51N15, 51F15, 51A05}

\keywords{projective geometry, projective transformation, null polarity, Klein's quadric, Klein's model}

\date{October 23, 2013}

\begin{abstract}
In this paper we combine methods from projective geometry, Klein's model, and Clifford algebra. We develop a Clifford algebra whose Pin group is a double cover of the group of regular projective transformations. The Clifford algebra we use is constructed as homogeneous model for the five-dimensional real projective space $\mathds{P}^5(\mathds{R})$ where Klein's quadric $M_2^4$ defines the quadratic form. We discuss all entities that can be represented naturally in this homogeneous Clifford algebra model.
Projective automorphisms of Klein's quadric induce projective transformations of $\mathds{P}^3(\mathds{R})$ and vice versa. Cayley-Klein geometries can be represented by Clifford algebras, where the group of Cayley-Klein isometries is given by the Pin group of the corresponding Clifford algebra. Therefore, we examine the versor group and study the correspondence between versors and regular projective transformations represented as $4\times 4$ matrices. Furthermore, we give methods to compute a versor corresponding to a given projective transformation.
\end{abstract}

\maketitle

\section{Introduction}
Homogeneous Coordinates for lines of three-dimensional projective space $\mathds{P}^3(\mathds{R})$ were introduced by Pl\"ucker, see \cite{blaschke:kinematikundquaternionen}. The Pl\"ucker coordinates of all lines form a quadric in five-dimensional projective space, the so-called Klein quadric denoted by $M_2^4\subset\mathds{P}^5(\mathds{R})$, see \cite{weiss}.
The group of regular projective transformations of $\mathds{P}^3(\mathds{R})$ is isomorphic to the group of projective automorphisms of Klein's quadric $M_2^4$, see \cite{potwal}. Moreover, the group of automorphic collineations of Klein's quadric is the isometry group of the Cayley-Klein space given by $\mathds{P}^5(\mathds{R})$ together with absolute figure $\mathrm{M}_2^4$. This isometry group corresponds to the Pin group of a special homogeneous Clifford algebra model, see \cite{gunn:kinematics}. Therefore, we introduce the Clifford algebra $\clifford_{(3,3)}$ constructed over the quadratic space $\mathds{R}^{(3,3)}$ and describe how points on Klein's quadric are embedded as null vectors. We use the approach carried out in \cite{hongbo}, where this construction was described for the first time. Furthermore, we discuss how geometric entities that are known from Klein's model can be transferred to this homogeneous Clifford algebra model.
The action of grade-$1$ elements corresponds to the action of null polarities on $\mathds{P}^3(\mathds{R})$. The main focus of this contribution is the representation of the occurring transformations as homogeneous matrices that act on $\mathds{P}^3(\mathds{R})$. Moreover, we prove that every regular projective transformation of $\mathds{P}^3(\mathds{R})$ can be expressed as the product of at most six null polarities, {\it i.e.}, skew-symmetric $4\times 4$ matrices.\\
\section{Klein's Model and its Clifford Algebra Representation}
We recall Klein's model of line space and give a homogeneous Clifford algebra model corresponding to Klein's model, cf. \cite{hongbo}. Moreover, we introduce the representation of various geometric entities of Klein's model in the corresponding Clifford algebra setting.
\subsection{Klein's Model}
Lines in three-dimensional space form a four-dimensional manifold called Klein's quadric, or Pl\"ucker's quadric. It is a special Grassmann variety. In fact, every straight line in $\mathds{P}^3(\mathds{R})$ is mapped to a point in five-dimensional projective space $\mathds{P}^5(\mathds{R})$, see \cite{blaschke:kinematikundquaternionen}. To make this explicit, we think of $\mathds{R}^4$ with its standard basis as model for $\mathds{P}^3(\mathds{R})$. A line is spanned by two different points\linebreak $X=x\mathds{R}$ and $Y=y\mathds{R}$ with homogeneous linearly independent coordinate vectors $x=(x_0,x_1,x_2,x_3)^\mathrm{T}$ and $y=(y_0,y_1,y_2,y_3)^\mathrm{T}$.
\begin{defn}\label{def1}\index{Pl\"ucker coordinates}
For two distinct points $X=x\mathds{R}=(x_0,x_1,x_2,x_3)^\mathrm{T}\mathds{R}$ and $Y=y\mathds{R}=(y_0,y_1,y_2,y_3)^\mathrm{T}\mathds{R}\in\mathds{P}^3(\mathds{R})$ we define the \emph{Pl\"ucker coordinates} of the line spanned by $X$ and $Y$ as:
\[p=(p_{01}\!:p_{02}\!:p_{03}\!:p_{23}\!:p_{31}\!:p_{12}), \mbox{ with }p_{ij}=\left|\begin{array}{cc} \!\!x_i \!\!&\!\! x_j\!\!\\ \!y_i \!\!&\!\! y_j\!\! \end{array}\right|.\]
The mapping $\mu:\mathcal{L}^3\mapsto\mathds{P}^5(\mathds{R})$, where $\mathcal{L}^3$ is the set of lines of $\mathds{P}^3(\mathds{R})$, that maps each line $L\in\mathcal{L}^3$ to a point $P=(p_{01}\!:\ldots:p_{12})$ of $\mathds{P}^5(\mathds{R})$ is called the \emph{Klein mapping}.
\end{defn}
\noindent Only those points of $\mathds{P}^5(\mathds{R})$ correspond to a line in $\mathds{P}^3(\mathds{R})$ that are contained in Klein's quadric $M_2^4$. Its equation is derived by expanding the determinant $\det(x,y,x,y)$ by complementary $2\times 2$ minors:
\[\left|\begin{array}{cccc}\!\! x_0 \!\!&\!\! x_1 \!\!&\!\! x_2 \!\!&\!\! x_3\!\!\\\!\!y_0 \!\!&\!\! y_1 \!\!&\!\! y_2 \!\!&\!\! y_3\!\!\\\!\!x_0 \!\!&\!\! x_1 \!\!&\!\! x_2 \!\!&\!\! x_3\!\!\\\!\!y_0 \!\!&\!\! y_1 \!\!&\!\! y_2 \!\!&\!\! y_3\!\! \end{array}\right|=\left|\begin{array}{cc}\!\! x_0 \!\!&\!\! x_1\!\!\\\!\! y_0 \!\!&\!\! y_1\!\!\end{array}\right|\left|\begin{array}{cc}\!\! x_2 \!\!&\!\! x_3\!\!\\\!\! y_2 \!\!&\!\! y_3\!\!\end{array}\right|+\left|\begin{array}{cc}\!\! x_0 \!\!&\!\! x_2\!\!\\\!\! y_0 \!\!&\!\! y_2\!\!\end{array}\right|\left|\begin{array}{cc}\!\! x_3 \!\!&\!\! x_1\!\!\\\!\! y_3 \!\!&\!\! y_1\!\!\end{array}\right|+\left|\begin{array}{cc}\!\! x_0 \!\!&\!\! x_3\!\!\\\!\! y_0 \!\!&\!\! y_3\!\!\end{array}\right|\left|\begin{array}{cc}\!\! x_1 \!\!&\!\! x_2\!\!\\\!\! y_1 \!\!&\!\! y_2\!\!\end{array}\right|=0,\]
which yields
\begin{equation}\label{pluecker}
M_2^4:p_{01}p_{23}+p_{02}p_{31}+p_{03}p_{12}=0.
\end{equation}
\begin{rem}
An algebraic variety with dimension $d$ and degree $k$ is denoted by a capital letter with subscript $k$ and superscript $d$, say $V_k^d$. Thus, Klein's quadric $M_2^4$ is an algebraic variety of degree two and dimension four. 
\end{rem}
\noindent Eq. \eqref{pluecker} can be reformulated as 
\begin{equation}\label{quadricmatrix}
x^\mathrm{T}\mathrm{Q}x=0, \mbox{ with }\mathrm{Q}=\begin{pmatrix}
 \mathrm{O} & \frac{1}{2}\mathrm{I}\\\frac{1}{2}\mathrm{I}&\mathrm{O}
\end{pmatrix},\, x=(x_0,x_1,x_2,x_3,x_4,x_5)^\mathrm{T},
\end{equation}
where $\mathrm{O}$ is the $3\times 3$ zero matrix and $\mathrm{I}$ is $3\times 3$ identity matrix. 
We have a one-to-one correspondence between lines in three-dimensional projective space and points on this quadric. The polarity of Klein's quadric $\nu$ can be expressed in matrix form by $\mathrm{Q}$. When working in five-dimensional projective space we use $\nu$ and postfix notation. Moreover, the polarity of Klein's quadric $\nu$ is regular, since
\[\det\mathrm{Q}=-\frac{1}{64}\neq 0.\]
The bilinear form induced by this polarity is denoted by $\Omega$ and defined by \[\Omega(x,x):=x^\mathrm{T}\mathrm{Q}x.\]
Two lines $L_1=l_1\mathds{R}$ and $L_2=l_2\mathds{R}$ given in Pl\"ucker coordinates intersect if, and only if, their images under the Klein map satisfy $\Omega(L_1,L_2):=l_1^\mathrm{T}\mathrm{Q}l_2=0$. This means, they have to be conjugate with respect to Klein's quadric. Now we ask for projective automorphisms of Klein's quadric induced by collineations or correlations in $\mathds{P}^3(\mathds{R})$. First, we transfer projective transformations acting on $\mathds{P}^3(\mathds{R})$ to automorphic collineations of $M_2^4$. Let
$\mathrm{C}= (c_{kl}), k,l=0,\dots,3$ be the matrix representation of a collineation. We apply this collineation to the points $X=x\mathds{R},\,Y=y\mathds{R}\in\mathds{P}^3(\mathds{R})$ with $x=(x_0,x_1,x_2,x_3)^\mathrm{T}$, $y=(y_0,y_1,y_2,y_3)^\mathrm{T}$ and compute the Pl\"ucker coordinates of the line joining $x'=Cx$ and $y'=Cy$.
The Pl\"ucker coordinates of the image line under this collineation are given by:
\begin{align*}
p_{ij}'&=\left|\begin{array}{cc}\!\! x'_i \!\!&\!\! x'_j\!\!\\\!\! y'_i \!\!&\!\! y'_j\!\!\end{array}\right|=x_i'y_j'-x_j'y_i'\\
&=\left( \sum\nolimits_{k}{c_{ik}x_k}\right) \left( \sum\nolimits_{l}{c_{jl}y_l}\right) -
  \left( \sum\nolimits_{l}{c_{jl}x_l}\right) \left( \sum\nolimits_{k}{c_{ik}y_k}\right)\\
  &=  \sum\nolimits_{k,l}{c_{ik}c_{jl}(x_ky_l-x_ly_k)},
\end{align*}
where $(i,j)$ is one of $(0,1),\,(0,2),\,(0,3),\,(2,3),\,(3,1)$ or $(1,2)$, see \cite[p. 139]{potwal}.
If we write the action of this transformation on the space of lines as matrix vector product we get a $6\times 6$ matrix $\mathrm{L}$ containing the coefficients from the equations above
\begin{equation*}
(p_{01}',\,p_{02}',\,p_{03}',\,p_{23}',\,p_{31}',\,p_{12}')^\mathrm{T}=
\mathrm{L}\cdot(p_{01},\,p_{02},\,p_{03},\,p_{23},\,p_{31},\,p_{12})^\mathrm{T}.
\end{equation*}
When we repeat this procedure for a correlation the columns of the matrix correspond to plane coordinates. Hence, we can compute the collineation in the image space in the same way, but in this case we have to calculate the Pl\"ucker coordinates of the image lines by the intersection of two planes instead of the connection of two points. We get
\begin{equation}\label{collineationlinespace}
(p_{01}',\,p_{02}',\,p_{03}',\,p_{23}',\,p_{31}',\,p_{12}')^\mathrm{T}=
\bar{\mathrm{L}}\cdot(p_{01},\,p_{02},\,p_{03},\,p_{23},\,p_{31},\,p_{12})^\mathrm{T},
\end{equation}
where $\bar{\mathrm{L}}$ defines an automorphic collineation of $M_2^4$ corresponding to a correlation in $\mathds{P}^3(\mathds{R})$.
\subsection{The homogeneous Clifford Algebra Model corresponding to\\Line Geometry}
General introductions to Clifford algebras can be found for example in \cite{gallier:cliffordalgebrascliffordgroups,garling:cliffordalgebras,perwass} and \cite{porteous:cliffordalgebrasandtheclassicalgroups}. Its connection to Cayley-Klein spaces is developed in \cite{gunn:kinematics}. In this article we will not recall Cayley-Klein geometries. An exhaustive treatise of this topic can be found in \cite{giering:vorlesungenueberhoeheregeometrie} or \cite{onishchik}. To build up the homogeneous model we use the quadratic form of Klein's quadric $M_2^4$, that is given by
\begin{equation*}
\mathrm{Q}=\begin{pmatrix} \mathrm{O} & \mathrm{I} \\ \mathrm{I} & \mathrm{O} \end{pmatrix},
\end{equation*}
where $\mathrm{O}$ is the $3\times 3$ zero matrix and $\mathrm{I}$ the $3 \times 3$ identity matrix. The matrix $\mathrm{Q}$ that is used here corresponds to the polarity of Klein's quadric, since multiplication with real scalars has no effect, see Eq.\ \eqref{quadricmatrix}. As underlying vector space for the Clifford algebra we take $\mathds{R}^6$ as vector space model for $\mathds{P}^5(\mathds{R})$. The corresponding Clifford algebra has signature $(p,q,r)=(3,3,0)$ (cf. \cite{hongbo}) and is of dimension $2^6=64$. Lines of $\mathds{P}^3(\mathds{R})$ represented by Pl\"ucker coordinates, see Definition \ref{def1}, correspond to null vectors in this algebra, {\it i.e.}, vectors that square to zero. A vector is given by
\begin{equation*}
\mathfrak{v}=x_1 e_1 + x_2 e_2 + x_3 e_3 + x_4 e_4 +x_5 e_5 +x_6 e_6
\end{equation*}
and its square is computed by
\begin{equation}\label{square}
\mathfrak{vv} = 2(x_1x_4+x_2x_5+x_3x_6).
\end{equation}
Eq. \eqref{square} evaluates to zero if, and only if, the Pl\"ucker condition \eqref{pluecker} is fulfilled, {\it i.e.}, if the point $X=(x_1,\dots,x_6)^\mathrm{T}\mathds{R}\in\mathds{P}^5(\mathds{R})$ is contained by $M_2^4$, and therefore, describes a line in $\mathds{P}^3(\mathds{R})$. The norm of a vector equals
\begin{equation*}
\mathfrak{vv}^\ast = -2(x_1x_4+x_2x_5+x_3x_6),
\end{equation*}
where $\mathfrak{v}^\ast$ denotes the conjugate element of $\mathfrak{v}$. Conjugation is defined by its action on generators:
\[(e_{i_1\dots i_k})^\ast=(-1)^ke_{i_k\dots i_1},\]
with $0<i_1<\dots<i_k\leq n=\dim V$.
Now we examine geometric entities that are described in this geometric algebra by inner product and outer product null spaces, see \cite{perwass}. Therefore, we define:
\begin{defn}
\begin{enumerate}
\item[(i)]A $k$-blade $\mathfrak{A}\in{\bigwedge}^k V$ is the $k$-fold exterior product of vectors $\mathfrak{v}_i,\,i=1,\dots,k$
\[\mathfrak{A}=\mathfrak{v}_1\wedge\dots\wedge\mathfrak{v}_k.\]
A $k$-blade that squares to zero is called a \emph{null $k$-blade}.
\item[(ii)] The \emph{inner product} and the \emph{outer product null space} of a $k$-blade\linebreak $\mathfrak{A}\in{\bigwedge}^k V$ are defined by
\[
\mathds{NI}(\mathfrak{A})\!=\!\left\lbrace\! \mathfrak{v} \!\in\!{\bigwedge}^1 V\!:\mathfrak{v}\cdot \mathfrak{A}\!=\!0\right\rbrace\!,\quad
\mathds{NO}(\mathfrak{A})\!=\!\left\lbrace\! \mathfrak{v} \!\in\!{\bigwedge}^1 V\!:\mathfrak{v}\wedge \mathfrak{A}\!=\!0\right\rbrace\!.
\]
\end{enumerate}
\end{defn}
\noindent  Furthermore, we have the property of the outer product and inner product null space:
\begin{equation}\label{nicap}
\mathds{NI}(\mathfrak{a}\wedge\mathfrak{b})=\mathds{NI}(\mathfrak{a})\cap\mathds{NI}(\mathfrak{b}),\quad
\mathds{NO}(\mathfrak{a}\wedge\mathfrak{b})=\mathds{NO}(\mathfrak{a})\oplus\mathds{NO}(\mathfrak{b}).
\end{equation}
The polarity of the metric quadric is given by multiplication with the pseudo-scalar $\mathfrak{J}:=e_{123456}$. The duality between subspaces of $\mathds{P}^5(\mathds{R})$ induced by the polarity is expressed by multiplication with the pseudo-scalar. Outer product null spaces can be used to describe the point set corresponding to an algebra element. Moreover, the dual geometric entity with respect to the polarity $\mathrm{Q}$ is obtained with the inner product null space
\[\mathds{NI}(\mathfrak{A})=\mathds{NO}(\mathfrak{AJ}).\]
\subsection{Subspaces contained in the Quadric and the corresponding\\Clifford Algebra Representation}\label{subspaces}
We are interested in the structure of Klein's quadric. The quadric is of hyperbolic type with two-dimensional generator spaces. Therefore, we take a closer look to one- and two-dimensional subspaces entirely contained in the quadric. In this paragraph we follow \cite{potwal} for each subspace, and immediately afterwards give the Clifford algebra description.
\vspace{5pt}
\paragraph{One-dimensional subspaces of $M_2^4$} To classify one-dimensional subspaces we give a lemma from \cite[Section 2.1.3]{potwal} without proof. Nevertheless, it is easy to verify that:
\begin{lem}
The Klein mapping takes a pencil of lines to a straight line contained in $M_2^4$. Vice versa, two points $X=x\mathds{R}$ and $Y=y\mathds{R}$ of $M_2^4$ correspond to intersecting lines in $\mathds{P}^3(\mathds{R})$ if, and only if, their span is contained in $M_2^4$.
\end{lem}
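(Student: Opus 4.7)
The plan is to handle the two directions of the lemma separately, using the bilinear form $\Omega$ and the incidence criterion stated just before the lemma, namely that two lines are incident iff their Klein images are $\Omega$-conjugate.

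For the forward statement, I would parametrize a pencil of lines. A pencil consists of all lines through a fixed point $P$ lying in a fixed plane $\pi$ through $P$. Choosing auxiliary points $X, Y \in \pi$ with $P, X, Y$ linearly independent, every line of the pencil has the form $P \vee Z$ with $Z = \alpha X + \beta Y$, $(\alpha,\beta)\neq(0,0)$. A direct expansion of the $2 \times 2$ determinants defining the Plücker coordinates gives
\[
p_{ij}(P \vee Z) = \alpha\, p_{ij}(P \vee X) + \beta\, p_{ij}(P \vee Y),
\]
so the image of the pencil is the projective line in $\mathds{P}^5(\mathds{R})$ spanned by $\mu(P \vee X)$ and $\mu(P \vee Y)$. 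Since every point on this $\mathds{P}^1$ represents a genuine line of $\mathds{P}^3(\mathds{R})$, it lies entirely in $M_2^4$.

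For the second statement, let $X = x\mathds{R}$ and $Y = y\mathds{R}$ be two distinct points of $M_2^4$ corresponding to lines $L_1, L_2$. Their span in $\mathds{P}^5(\mathds{R})$ is $\{(\alpha x + \beta y)\mathds{R} : (\alpha,\beta) \neq (0,0)\}$. By bilinearity of $\Omega$ together with $\Omega(x,x) = \Omega(y,y) = 0$, I get
\[
\Omega(\alpha x + \beta y,\, \alpha x + \beta y) = 2\alpha\beta\, \Omega(x,y),
\]
which vanishes identically in $\alpha,\beta$ if and only if $\Omega(x,y) = 0$. Thus the span is contained in $M_2^4$ exactly when $x$ and $y$ are conjugate with respect to $\mathrm{Q}$, which by the incidence criterion is equivalent to $L_1 \cap L_2 \neq \emptyset$. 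The converse direction (intersecting lines produce a span on $M_2^4$) can equally be read off from the first part, since the connecting line in $\mathds{P}^5(\mathds{R})$ then coincides with the Klein image of the pencil through $L_1 \cap L_2$ in the plane $L_1 \vee L_2$.

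Since every step reduces to a multilinear expansion of $2 \times 2$ minors and to the already established conjugacy criterion for line incidence, I do not anticipate any real obstacle; the only care needed is to ensure $P, X, Y$ are chosen independent so that the parametrization of the pencil is faithful and the two generating points $\mu(P \vee X)$, $\mu(P \vee Y)$ of the image line are actually distinct.
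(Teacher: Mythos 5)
Your proof is correct. Note that the paper itself states this lemma explicitly \emph{without} proof (it is quoted from Pottmann--Wallner with the remark that it is ``easy to verify''), so there is no in-paper argument to compare against; your write-up supplies exactly the standard verification one would expect. Both halves are sound: the linearity of the $2\times 2$ minors in the moving point $Z=\alpha X+\beta Y$ gives that the Klein image of a pencil is the projective line spanned by two of its (distinct) images, and the expansion $\Omega(\alpha x+\beta y,\alpha x+\beta y)=2\alpha\beta\,\Omega(x,y)$ on the quadric reduces the span condition to conjugacy, which the paper's incidence criterion identifies with intersection of the two lines. The only dependency worth flagging is that the incidence criterion $\Omega(L_1,L_2)=0 \Leftrightarrow L_1\cap L_2\neq\emptyset$ is itself only asserted in the paper (it follows from polarizing the determinant identity $\det(x,y,z,w)=0$ for coplanar point quadruples); if you wanted a fully self-contained proof you would include that one-line polarization, but as a proof within the context of this paper your argument is complete.
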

\paragraph{Lines contained in $M_2^4$ as null two-blades}
Null blades can be used to describe subspaces contained entirely in $M_2^4$. Subspaces that are contained in $M_2^4$ are either lines or two-spaces in $\mathds{P}^5(\mathds{R})$. A null two-blade generated by the exterior product of two null vectors corresponding to conjugate points on $M_2^4$ defines a line in $M_2^4\subset\mathds{P}^5(\mathds{R})$. Its outer product null space is the set of all null vectors corresponding to a pencil of lines in $\mathds{P}^3(\mathds{R})$.
\vspace{5pt}
\paragraph{Two-dimensional subspaces of $M_2^4$}
There are two types of two-spaces that are completely contained by $M_2^4$. The first one is given by the image of a  field of lines, {\it i.e.}, all lines contained by the same plane in $\mathds{P}^3(\mathds{R})$. The second type is given by all lines concurrent to the same point, {\it i.e.}, a bundle of lines. Now we ask for the intersection of two-spaces contained in Klein's quadric. Two-spaces of the same type always intersect in one point. This means two different fields of lines or two different bundles of lines contain one common line. Two two-spaces of different type may have either empty intersection or they intersect in a line. Thus, a field of lines and a bundle of lines may have empty intersection or they intersect in a common pencil of lines. The type of the two-space, {\it i.e.}, if it corresponds to a bundle of lines or a field of lines can be determined by its intersection with the Klein image of the field of ideal lines denoted by $P_\omega$. Note that $P_\omega$ corresponds to a field of lines, and therefore, to a two-space entirely contained in Klein's quadric. If a two-space corresponds to a bundle of lines it has either no point or a whole line in common with $P_\omega$. A two-space corresponding to a field of lines has one point in common with $P_\omega$. Figure \ref{fig:kleinquadric} shows the correspondence between lines and two-spaces on Klein's quadric and their pre-images under the Klein mapping. This figure is inspired by a figure from \cite[p. 142]{potwal}.
\begin{figure}%
\begin{center}
\begin{overpic}[width=0.7\columnwidth]{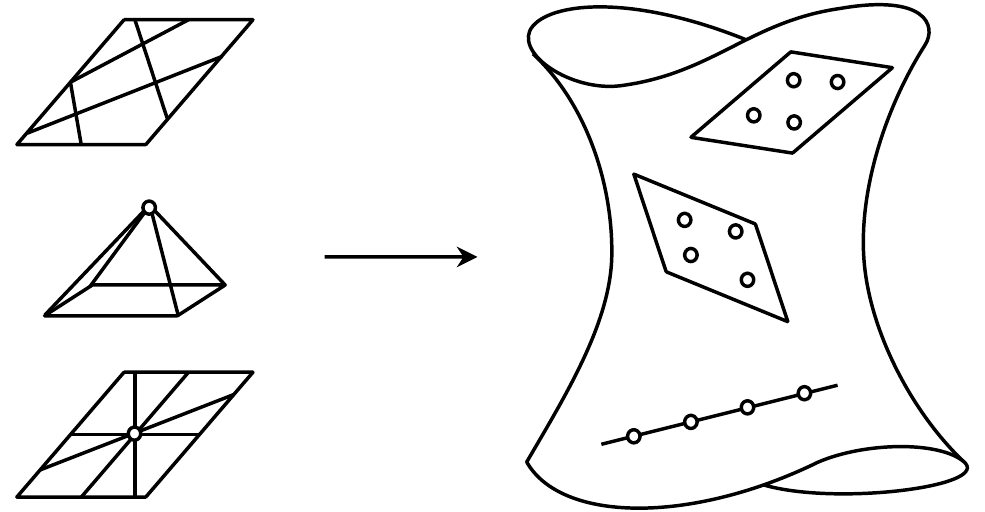}
	\put(39,27){$\mu$}
	\put(5,45){$F$}
	\put(5,25){$B$}
	\put(5,9){$P$}
	\put(85,47){$F\mu$}
	\put(79,25){$B\mu$}
	\put(64,10){$P\mu$}
	\put(60,46){$M_2^4$}
	\put(94,51){$\mathds{P}^5(\mathds{R})$}
	\put(0,51){$\mathds{P}^3(\mathds{R})$}
\end{overpic}
\caption{Subspaces on Klein's quadric and their geometric interpretation in $\mathds{P}^3(\mathds{R})$. A pencil of lines $P$ is mapped to the line $P\mu$ on $M_2^4$. A bundle of lines $B$ and a field of lines $F$ are mapped to a two-space $B\mu$ and $F\mu$ that are contained entirely in $M_2^4$.}%
\label{fig:kleinquadric}%
\end{center}
\end{figure}
\paragraph{Two-spaces contained in $M_2^4$ as null three-blades}
A two-space $P_1^2$ in $\mathds{P}^5(\mathds{R})$ that is contained entirely in Klein's quadric can be expressed as the exterior product of three null vectors corresponding to points contained in the two-space. This results in a null three-blade. Its outer product null space consists of all null vectors that correspond to points contained in the two-space \linebreak $P_1^2\subset M_2^4$, {\it i.e.}, a bundle of lines or a field of lines.
\subsection{Subspace Intersections of $M_2^4$}
With Klein's mapping sets of lines can be studied as sets of points in five-dimensional projective space. Clearly, all points on the quadric are self-conjugate with respect to the polarity $\nu$. It is natural to ask for the intersections of Klein's quadric with subspaces. In Klein's model $k$-space intersections, $k\leq 4$ with $M_2^4\subset\mathds{P}^5(\mathds{R})$ define special sets of lines in $\mathds{P}^3(\mathds{R})$. These sets of lines correspond to so-called linear line manifolds, cf. \cite{potwal}. In this section we recall the occurring linear line manifolds and introduce their Clifford algebra representation as outer product null space of $(k+1)$-blades. Using duality $k$-space intersections can also be described as inner product null space of $(n-(k+1))$-blades where $n=6$ is the dimension of the vector space model of $\mathds{P}^5(\mathds{R})$.
\vspace{5pt}
\paragraph{Conics on Klein's quadric}
Two-spaces which are neither tangent to nor belong to Klein's quadric intersect the quadric in a conic section.
Tangent two-space intersections with $M_2^4$ result in degenerated conics, {\it i.e.}, two intersecting lines on Klein's quadric that correspond to two pencils of lines with one line in common in three-dimensional projective space. In this paragraph we discus the non-degenerated case. Therefore, we choose three skew lines with corresponding Pl\"ucker coordinates $L_1,L_2$ and $L_3$ with $\Omega(L_i,L_j)\neq 0,\, i,j=1,2,3,\, i\neq j$. These three lines possess image points spanning a two-space 
\[P_1^2(\alpha:\beta:\gamma)=\alpha L_1+\beta L_2+\gamma L_3,\,\alpha,\beta,\gamma\in\mathds{R}.\]
The set of all points that are conjugate to this two-space is spanned by\linebreak $\bar{P}_1^2=L_1\nu\cap L_2\nu \cap L_3\nu$. This defines the polar two-space of $P_1^2$. Since this polar two-space is the intersection of the three tangent hyperplanes $L_i\nu,\,i=1,\dots,3$, we conclude that the intersection of $\bar{P}_1^2\cap M_2^4$ corresponds to the set of all lines in $\mathcal{L}^3$ intersecting $L_1,L_2$, and $L_2$. We can do this construction for three points of the resulting conic again and get the same statement for the original points $L_1,L_2$, and $L_3$. Furthermore, we derive that these two sets of lines are different sets of generators of the same ruled surface. We define:
\begin{defn}\label{regulus}
The set of all lines intersecting three given mutually skew lines $L_1,L_2,L_3\in\mathcal{L}^3$ is called a \emph{regulus}. A regulus is part of a ruled quadric. The image of a regulus under the polarity $\nu$ that defines the second family of generators of a ruled quadric is called the \emph{opposite regulus}.
\end{defn}
\noindent Therefore, every regular conic on Klein's quadric corresponds to a regulus. Reguli can be distinguished in the Klein model by their intersection with the two-space $P_\omega$ corresponding to the field of ideal lines. In affine space a regulus that carries an ideal line is a hyperbolic paraboloid. Otherwise if there is no ideal line the conic corresponds to a hyperboloid of one sheet.
\vspace{5pt}
\paragraph{Conics on Klein's quadric as non-null three-blades}
Three-blades corresponding to two-spaces in $\mathds{P}^5(\mathds{R})$ can be defined as exterior product of three null vectors $\mathfrak{v}_1,\,\mathfrak{v}_2,\,\mathfrak{v}_3\in\bigwedge^1 V$ corresponding to points on $M_2^4$. If the three-blade $\mathfrak{B}\in\bigwedge^3 V$ squares to zero it corresponds to a two-space that is entirely contained in $M_2^4$, else it corresponds to a two-space that intersects in a conic on $M_2^4\subset\mathds{P}^5(\mathds{R})$. All points contained by this two-space can be computed with the help of the outer product null space
\[\mathds{NO}(\mathfrak{v}_1\wedge \mathfrak{v}_2\wedge\mathfrak{v}_3)=\mathds{NO}(\mathfrak{v}_1)\oplus\mathds{NO}(\mathfrak{v}_2)\oplus\mathds{NO}(\mathfrak{v}_3).\]
To get the null vectors located in the two-space $\mathfrak{p}=\alpha \mathfrak{v}_1+\beta \mathfrak{v}_2 +\gamma \mathfrak{v}_3$ we determine the zero divisors by $\mathfrak{pp}=0$.
This results in a quadratic equation involving the coefficients $\alpha,\,\beta$, and $\gamma$. The solution is given by the intersection of Klein's quadric with the two-space. In $\mathds{P}^5(\mathds{R})$ the dual of a two-space is a two-space and the points contained by the dual of a two-space can be calculated by the inner product null space of a three-blade corresponding to the two-space.
\vspace{5pt}
\paragraph{Linear Line Congruences}
The two-parametric set of lines corresponding to a three-space intersection of Klein's quadric is called a linear line congruence. In line geometry we distinguish between hyperbolic, parabolic, and elliptic linear line congruences, see \cite{potwal}.
\vspace{5pt}
\paragraph{Linear line congruences as four-blades}
Three-spaces are polar to lines. Thus, linear line congruences can be described by inner product null spaces of two-blades that correspond to lines in $\mathds{P}^5(\mathds{R})$ or outer product null spaces of four-blades that correspond to three-spaces in $\mathds{P}^5(\mathds{R})$. Lines in $\mathds{P}^5(\mathds{R})$ are represented by the exterior product of two vectors $\mathfrak{v}_1,\mathfrak{v}_2\in\bigwedge^1 V$ corresponding to points in $\mathds{P}^5(\mathds{R})$. A general line in $\mathds{P}^5(\mathds{R})$ written as outer product of two arbitrary vectors 
\begin{align*}
\mathfrak{v}_1&=x_1 e_1 + x_2 e_2 + x_3 e_3 +x_4 e_4+x_5 e_5 +x_6 e_6,\\
\mathfrak{v}_2&=y_1 e_1 + y_2 e_2 + y_3 e_3 +y_4 e_4+y_5 e_5 +y_6 e_6
\end{align*}
has the form
\begin{equation*}
\mathfrak{L}=\mathfrak{v}_1\wedge\mathfrak{v}_2=\sum\limits_{\substack{i,j=1\\i<j}}^6{\left|\begin{array}{cc}\!\! x_i \!\!&\!\! x_j\!\!\\\!\! y_i \!\!&\!\! y_j\!\!\end{array}\right|e_{ij}}.
\end{equation*}
With Eq. \eqref{nicap} we know that the two conditions 
\begin{align*}
&a_1x_4+a_2x_5+a_3x_6+a_4x_1+a_5x_2+a_6x_3=0,\\
&a_1y_4+a_2y_5+a_3y_6+a_4y_1+a_5y_2+a_6y_3=0
\end{align*}
are sufficient to describe the inner product null space of $\mathfrak{L}$.
All null vectors\linebreak $\mathfrak{a}=a_1e_1+a_2e_2+a_3e_3+a_4e_4+a_5e_5+a_6e_6$ satisfying these conditions correspond to the set of lines contained in a linear line congruence. In a dual way, a linear line congruence can also be represented as the outer product null space of a four-blade that can be constructed as exterior product of four linearly independent vectors.
\vspace{5pt}
\paragraph{Linear Line Complexes}
Linear line complexes correspond to hyperplanar intersections of $M_2^4$. Therefore, they define three-dimensional line manifolds.
\begin{defn}\label{linecomplex}
A \emph{linear line complex} is defined by all lines corresponding to the inner product null space of a vector $\mathfrak{c}\in\bigwedge^1 V$:
\[\mathds{NI}(\mathfrak{c})=\lbrace \mathfrak{l}\in{\bigwedge}^1V \mid\mathfrak{c}\cdot\mathfrak{l}=0 \rbrace.\]
In five-dimensional projective space we use the coordinates $L=(l_1,\dots,l_6)^\mathrm{T}\mathds{R}$ corresponding to $\mathfrak{l}$ and $C=(c_1,\dots,c_6)^\mathrm{T}\mathds{R}$ corresponding to $\mathfrak{c}$.
\end{defn}
\noindent A linear line complex is called \emph{singular} if the hyperplane is tangent to $M_2^4$ else \emph{regular}.
Note that the hyperplane defined by $P_1^4=\mathds{R}(\bar{c},c)^\mathrm{T}$ is polar to the point $C=(c,\bar{c})^\mathrm{T}\mathds{R}$. This point is contained in $M_2^4$ in the singular case.
\begin{rem}
We can derive any regular (singular) linear line complex from one single regular (singular) linear line complex by a projective transformation. With respect to affine geometry one has to distinguish between two equivalence classes of regular linear line complexes, right winded and left winded complexes. In a projectively enclosed Euclidean space each linear complex is characterised by an Euclidean invariant, the so-called \emph{pitch} $p\in\mathds{R}$ (see \cite{potwal}, \cite{weiss}) and all proper lines of a regular linear line complex can be interpreted as the path normals of a helical motion.\\
If $(c,\bar{c})^\mathrm{T}\mathds{R}=((c_1,c_2,c_3),(c_4,c_5,c_6))^\mathrm{T}\mathds{R}$ is the Pl\"ucker coordinate vector of a linear line complex $C$, then we get its pitch p by
\begin{equation*}
p=\frac{c\cdot \bar{c}}{c^2}.
\end{equation*}
The axis of the helicoidal motion corresponding to the linear line complex $C$ is determined by
\[A=(a,\bar{a})^\mathrm{T}\mathds{R}=((a_1,a_2,a_3),(a_4,a_5,a_6))^\mathrm{T}\mathds{R}=(c,\bar{c}-pc)^\mathrm{T}\mathds{R}.\]
In case of a singular linear line complex the pitch equals $0$, $A=C$, and the complex contains all lines that are path normals of a one-parameter pure rotation or pure translation group of three-dimensional Euclidean space $\mathds{E}^3$.  
\end{rem}
\noindent The condition $\Omega(C,L)=0$ is exactly the condition for the intersection of lines in this case. With this knowledge we can identify the points of $\mathds{P}^5(\mathds{R})$ that are not contained by $M_2^4$ with regular linear line complexes. The points on $M_2^4$ correspond to singular linear line complexes. This is called the \emph{extended Klein mapping} $\hat{\mu}$, see \cite{weiss}.
\vspace{5pt}
\paragraph{Linear Line Complexes as five-blades}
A four-space in $\mathds{P}^5(\mathds{R})$ can be described as the outer product null space of a five-blade $\mathfrak{B}\in\bigwedge^5 V$. Using duality, the same four-space is defined by the inner product null space of the vector $\mathfrak{BJ}\in\bigwedge^1 V$. If the given vector is a null vector, the linear line complex is singular, else regular. The outer product null space of a vector and the inner product null space of its dual corresponds to the vector itself. Let $\mathfrak{v}=x_1e_1+x_2e_2+x_3e_3+x_4e_4+x_5e_5+x_6e_6$ be a general vector. Then its inner product null space is given by
\[\mathds{NI}(\mathfrak{v})=\left\lbrace \mathfrak{a}\in{\bigwedge}^1 V \mid x_1a_4+x_2a_5+x_3a_6+x_4a_1+x_5a_2+x_6a_3=0\right\rbrace,\]
with $\mathfrak{a}=a_1e_1+a_2e_2+a_3e_3+a_4e_4+a_5e_5+a_6e_6$. The same set of lines can be obtained as the outer product null space of the dual of $\mathfrak{v}$:
\[\mathds{NO}(\mathfrak{v}\mathfrak{J})=\left\lbrace \mathfrak{a}\in{\bigwedge}^1 V \mid x_1a_4+x_2a_5+x_3a_6+x_4a_1+x_5a_2+x_6a_3=0\right\rbrace,\]
with $\mathfrak{a}=a_1e_1+a_2e_2+a_3e_3+a_4e_4+a_5e_5+a_6e_6$.
Note, that it is not important from which side we multiply with the pseudo-scalar, since $\mathfrak{Jv}=-\mathfrak{vJ}$. We are working in a projective setting, and thus, the multiplication by $-1$ or by a real number does not change the geometric meaning of the object and its inner product or outer product null space.
\section{Transformations}\label{versorgroup}
To describe the action of the versor group we recall the definition of a versor, see \cite{perwass}.
\begin{defn}
A \emph{versor} $\mathfrak{a}$ is an algebra element that can be expressed as the $k$-fold geometric product of non-null vectors. Thus, $\mathfrak{a}=\mathfrak{v}_1\dots \mathfrak{v}_k$ with $\mathfrak{v}_i\in\bigwedge^1 V,\,i=1,\dots,k$.
\end{defn}
\noindent Transformations are applied with the so-called sandwich operator.
\begin{defn}
The product $\alpha(\mathfrak{a})\mathfrak{va}^{-1}$ is called the \emph{sandwich operator}, where $\alpha$ denotes the \emph{main involution} of the Clifford algebra. The main involution is defined by its action on generators
\[\alpha(e_{i_1\dots i_k})=(-1)^ke_{i_1\dots i_k},\]
with $0<i_1<\dots<i_k\leq n=\dim V$.
\end{defn}
\begin{rem}
For a vector $\mathfrak{v}\in\bigwedge^1 V$ the inverse element can be obtained by
\[\mathfrak{v}^{-1}=\frac{\mathfrak{v}^\ast}{\mathfrak{vv}^\ast}.\]
Thus, the inverse element differs from the conjugate element by a real factor. This is also true for a $k$-fold product of vectors.
\end{rem}
\noindent Since we are working in a homogeneous Clifford algebra model multiplication with a real factor does not change the geometric meaning of an algebra element. Therefore, we use the conjugate element instead of the inverse element. Hence, the sandwich operator that we use is given by:
\[\alpha(\mathfrak{a})\mathfrak{va}^\ast=\mathfrak{aa}^\ast(\alpha(\mathfrak{a})\mathfrak{va}^{-1}).\]
This operator does not involve an inverse, and therefore, it can also be applied if the element $\mathfrak{a}$ is not invertible.\\

\noindent If a vector corresponds to a line it is a null vector respectively a zero divisor, {\it i.e.}, it has no inverse. Applying the sandwich operator with an arbitrary element of the versor group to a zero divisor results again in a zero divisor since the zero divisors forms an ideal in the algebra. This means null vectors corresponding to lines in $\mathds{P}^3(\mathds{R})$ are mapped to null vectors corresponding to lines in $\mathds{P}^3(\mathds{R})$. The transformations induced by non-null vectors are reflections in $\mathds{R}^6$ as a model for $\mathds{P}^5(\mathds{R})$ that fix Klein's quadric.\\

\noindent The advantage of this  model is that transformations can be applied via the sandwich operator to every entity that can be represented as $\mathfrak{A}\in{\bigwedge}^k V$ in inner product or outer product null space representation. For example, we are able to apply the transformation to a four-blade that corresponds to a linear line congruence with one sandwich operator. The resulting element is again a four-blade corresponding to a linear line congruence that is of the same projective type (elliptic, parabolic, or hyperbolic) as the original one. At this point we recall a theorem from \cite[theorem 2.1.10]{potwal}.
\begin{thm}
Projective collineations and correlations of $\mathds{P}^3(\mathds{R})$ induce projective automorphisms of Klein's quadric, and Klein's quadric does not admit other projective automorphisms.
\end{thm}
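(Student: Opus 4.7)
The proof splits into two implications, and I would organise it accordingly.

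\textbf{Forward direction.} I would first verify that any collineation $\mathrm{C}\in\mathrm{GL}_4(\mathds{R})$ of $\mathds{P}^3(\mathds{R})$ induces a projective automorphism of $M_2^4$. This is essentially already done in the excerpt: the explicit computation yielding the matrix $\mathrm{L}$ in \eqref{collineationlinespace} shows that the Plücker coordinates of the image line $\mathrm{C}X\vee \mathrm{C}Y$ depend linearly on the Plücker coordinates of $X\vee Y$. Since $\mathrm{C}$ is regular, so is $\mathrm{L}$, and since lines map to lines, $\mathrm{L}$ maps points of $M_2^4$ to points of $M_2^4$; hence $\mathrm{L}$ restricts to a projective automorphism of Klein's quadric. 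For a correlation $\mathrm{C}$ of $\mathds{P}^3(\mathds{R})$, the image of a line is the meet of the two image planes of two spanning points; the same $6\times 6$ construction applied to plane coordinates yields a regular matrix $\bar{\mathrm{L}}$ which again stabilises $M_2^4$.

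\textbf{Reverse direction.} Let $\phi$ be any projective automorphism of $M_2^4$. I would exploit the hyperbolic structure recalled in Subsection~\ref{subspaces}: $M_2^4$ carries exactly two families $\mathcal{B}$ and $\mathcal{F}$ of two-dimensional generator subspaces, parametrised respectively by bundles of lines (equivalently, points of $\mathds{P}^3(\mathds{R})$) and fields of lines (equivalently, planes of $\mathds{P}^3(\mathds{R})$). These two families are intrinsically distinguished on $M_2^4$ by their incidence behaviour: two distinct generators of the same family meet in a single point, whereas two generators of different families either are disjoint or meet in a line. Because $\phi$ is a projective bijection of $\mathds{P}^5(\mathds{R})$ preserving $M_2^4$, it permutes the set of generator two-planes and preserves the dimensions of their intersections. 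Consequently $\phi$ either stabilises each family or swaps them. In the first case $\phi$ induces a bijection $\mathcal{B}\to\mathcal{B}$, and hence a bijection $f$ of the point set of $\mathds{P}^3(\mathds{R})$; analogously a bijection on the plane set. These two bijections are incidence-preserving, because a point~$P$ lies in a plane~$\pi$ exactly when the bundle at $P$ and the field in $\pi$ share a pencil, i.e., a line contained in $M_2^4$, and this line-incidence is preserved by $\phi$. In the second case, $\phi$ sends $\mathcal{B}$ to $\mathcal{F}$ and produces an incidence-preserving bijection between points and planes of $\mathds{P}^3(\mathds{R})$, which is by definition a correlation.

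\textbf{Main obstacle.} The delicate step is upgrading the incidence-preserving bijection $f$ on $\mathds{P}^3(\mathds{R})$ to a projective transformation, after which one has to check that the $\mathrm{L}$-matrix associated to $f$ actually reproduces the original $\phi$ on all of $M_2^4$. Over $\mathds{R}$, where the only field automorphism is the identity, the first half is handled by the fundamental theorem of projective geometry: any incidence-preserving bijection of $\mathds{P}^3(\mathds{R})$ is induced by an element of $\mathrm{GL}_4(\mathds{R})$ (respectively a correlation). For the second half I would argue that two projective automorphisms of $M_2^4$ which agree on every generator of one family must agree on all of $M_2^4$, since every null vector lies in some bundle generator, and a projective map of $\mathds{P}^5(\mathds{R})$ determined on the dense subset $M_2^4$ is determined globally. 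Alternatively, one can bypass the fundamental theorem by choosing four bundles in general position whose apex points form a projective frame of $\mathds{P}^3(\mathds{R})$, lifting the four image bundles back to their apices, and checking directly that the unique element of $\mathrm{PGL}_4(\mathds{R})$ sending frame to frame has $\mathrm{L}$-matrix equal to $\phi$ on a spanning set of $\mathds{P}^5(\mathds{R})$.
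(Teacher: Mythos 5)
The paper does not actually prove this theorem: it is quoted verbatim from \cite[Theorem 2.1.10]{potwal} and used as a black box, so there is no internal proof to compare against. Your argument is, in substance, the standard proof one finds in that reference and in the classical line-geometry literature, and it is essentially sound. The forward direction coincides with the computation the paper itself carries out just before Eq.~\eqref{collineationlinespace}: the second compound matrix $\mathrm{L}$ (resp.\ $\bar{\mathrm{L}}$ for correlations) is regular and stabilises $M_2^4$. The reverse direction correctly isolates the two key ingredients: (i) a projective automorphism of a hyperbolic quadric permutes its maximal generator planes and, by the intersection-dimension parity (same family: point; different families: empty or a line, exactly as recalled in Subsection~\ref{subspaces}), either preserves or swaps the two families, yielding an incidence-preserving bijection of points and planes of $\mathds{P}^3(\mathds{R})$ (or of points onto planes); (ii) the fundamental theorem of projective geometry over $\mathds{R}$, where the absence of nontrivial field automorphisms upgrades this to a collineation or correlation. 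This is a genuinely complete route, whereas the paper only gestures at the result; your write-up would make the paper more self-contained.

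One step deserves tightening. What your construction actually establishes is that $\phi$ and the induced automorphism $\mathrm{L}_f$ of the recovered transformation $f$ determine the \emph{same permutation of the family} $\mathcal{B}$ of generator planes, not that they agree pointwise on each such plane; and a null vector merely \emph{lying in some} bundle generator does not yet pin down its image. The clean fix is to observe that every point of $M_2^4$ is the unique intersection of two distinct generators of the same family (the Klein image of a line $L$ is the intersection of the bundle generators of any two distinct points of $L$), so $\phi\circ\mathrm{L}_f^{-1}$ fixes every point of $M_2^4$; since $M_2^4$ contains a projective frame of $\mathds{P}^5(\mathds{R})$, this forces $\phi=\mathrm{L}_f$ as projective maps. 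With that sentence inserted, the argument is complete.
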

\noindent The Clifford group of this Clifford algebra,{\it i.e.}, the projective automorphisms of Klein's quadric is generated by non-null vectors. Thus, we are interested in the action of non-null vectors on null vectors.
Let
\begin{align*}
\mathfrak{a}&=a_1 e_1 + a_2 e_2 + a_3 e_3 +a_4 e_4+a_5 e_5 +a_6 e_6,\\
\mathfrak{v}&=x_1e_1+ x_2 e_2 + x_3 e_3 +x_4 e_4+x_5 e_5 +x_6 e_6
\end{align*}
be two vectors with 
\[\mathfrak{aa}=a_1a_4+a_2a_5+a_3a_6\neq 0,\,\quad \mathfrak{vv}=v_1v_4+v_2v_5+v_3v_6=0.\]
The action of the sandwich operator $\alpha(\mathfrak{a})\mathfrak{v}\mathfrak{a}^\ast$ is linear on the vector space $\bigwedge^1 V$.
The matrix acting on $\mathds{P}^5(\mathds{R})$ can be represented by
\begin{equation}\label{nullpolaritycoll}
\mathrm{M}=\begin{pmatrix}
  k_1 & a_1a_5   & a_1a_6 & a_1a_1 & a_1a_2 & a_1a_3\\
 a_2a_4 & k_2    & a_2a_6 & a_2a_1 & a_2a_2 & a_2a_3\\
 a_3a_4 & a_3a_5 & k_3    & a_3a_1 & a_3a_2 & a_3a_3 \\
 a_4a_4 & a_4a_5 & a_4a_6 & k_4    & a_4a_2 & a_4a_3\\
 a_5a_4 & a_5a_5 & a_5a_6 & a_5a_1 & k_5    & a_5a_3\\
 a_6a_4 & a_6a_5 & a_6a_6 & a_6a_1 & a_6a_2 &  k_6 
\end{pmatrix},
\end{equation}
with
\begin{align*}
k_1&=-a_5a_2-a_6a_3,\quad &&k_2=-a_6a_3-a_4a_1,&&k_3=-a_4a_1-a_5a_2,\\
k_4&=-a_5a_2-a_6a_3,\quad &&k_5=-a_6a_3-a_4a_1,\quad &&k_6=-a_4a_1-a_5a_2.
\end{align*}
\noindent Naturally, we now ask for the corresponding projective mapping acting on $\mathds{P}^3(\mathds{R})$. Therefore, we examine the action of the collineation $\mathrm{M}$ on the space of lines. The action on the base lines
\begin{align*}
b_1&=(1:0:0:0:0:0),&&b_2=(0:1:0:0:0:0),&&b_3=(0:0:1:0:0:0),\\
b_4&=(0:0:0:1:0:0),&&b_5=(0:0:0:0:1:0),&&b_6=(0:0:0:0:0:1)
\end{align*}
is given by the columns of the matrix $\mathrm{M}$.
\begin{equation*}
h_1=\mathrm{M}b_1,\quad h_2=\mathrm{M}b_2,\quad h_3=\mathrm{M}b_3,\quad h_4=\mathrm{M}b_4,\quad h_5=\mathrm{M}b_5,\quad h_6=\mathrm{M}b_6.
\end{equation*}
If we look at the images of the three ideal lines $b_4,b_5,b_6$ we see, that the corresponding lines posses linear dependent direction vectors. This means that the three image lines $h_4,h_5,h_6$ intersect in the same ideal point, and therefore, they belong to a bundle of lines. Since the lines $b_4,b_5,b_6$ define a field of lines and the image is a bundle of lines, the mapping must be a correlation. With this knowledge we determine the action of $\mathrm{M}$ on the bundle of lines concurrent to the origin spanned by $b_1,b_2,b_3$. The image of this bundle of lines is a field of lines. To obtain the coordinates of the plane that carries the field of lines we intersect the image lines and get three points defining the plane in $\mathds{P}^3(\mathds{R})$
\begin{align*}
h_1\cap h_2&=s_1=(a_3,-a_5,a_4,0)^\mathrm{T},\\
h_1\cap h_3&=s_2=(a_2,a_6,0,-a_4)^\mathrm{T},\\
h_2\cap h_3&=s_3=(a_1,0,-a_6,a_5)^\mathrm{T}.
\end{align*}
The plane coordinates $p_1$ of the plane generated by these three points have to satisfy
\begin{equation}\label{planeeq}
\langle p_1,s_1\rangle=\langle p_1,s_2\rangle=\langle p_1,s_3\rangle=0.
\end{equation}
Solving Eq.\ \ref{planeeq} results in $p_1=\mathds{R}(0,a_4,a_5,a_6)^\mathrm{T}$ as image of the origin\linebreak $O=(1,0,0,0)^\mathrm{T}\mathds{R}$. We repeat this procedure for the bundle of lines spanned by $b_1,b_5,b_6$, {\it i.e.}, the bundle of lines concurrent to the ideal point\linebreak $X_u=(0,1,0,0)^\mathrm{T}\mathds{R}$.
\begin{align*}
h_1\cap h_5&=(a_2,a_6,0,-a_4)^\mathrm{T}, &&h_1\cap h_6=(a_3,-a_5,a_4,0)^\mathrm{T},\\
h_5\cap h_6&=(0,a_1,a_2,a_3)^\mathrm{T},  && p_2=\mathds{R}(-a_4,0,a_3,-a_2)^\mathrm{T}.
\end{align*}
Repeating the procedure for the bundle of lines concurrent to the ideal point $Y_u=(0,0,1,0)^\mathrm{T}\mathds{R}$ spanned by the lines $b_2,\,b_4,$ and $b_6$ yields
\begin{align*}
h_2\cap h_4&=(a_1,0,-a_6,a_5)^\mathrm{T}, &&h_2\cap h_6=(a_3,-a_5,a_4,0)^\mathrm{T},\\
h_4\cap h_6&=(0,a_1,a_2,a_3)^\mathrm{T}, &&p_3=\mathds{R}(-a_5,-a_3,0,a_1)^\mathrm{T}.
\end{align*}
Analogue computation for the bundle of lines spanned by $b_3,\,b_4,$ and $b_5$ concurrent to the ideal point $Z_u=(0,0,0,1)^\mathrm{T}\mathds{R}$ results in
\begin{align*}
h_3\cap h_4&=(a_1,0,-a_6,a_5)^\mathrm{T}, &&h_3\cap h_5=(a_2,a_6,0,-a_4)^\mathrm{T},\\
h_4\cap h_5&=(0,a_1,a_2,a_3)^\mathrm{T},  &&p_4=\mathds{R}(-a_6,a_2,-a_1,0)^\mathrm{T}.
\end{align*}
Note, that the correlation is not determined through $p_1,\dots,p_4$ since the scaling of each $p_i,\,i=1,\dots,4$ is not determined yet. Together with the matrix representation of a correlation see Eq. \eqref{collineationlinespace} the correlation in $\mathds{P}^3(\mathds{R})$ is fixed. It can be written in matrix form as
\begin{equation*}
m=\begin{pmatrix}
0 & -a_4 & -a_5 & -a_6\\ a_4 & 0 & -a_3 & a_2\\ a_5 & a_3 & 0 & -a_1\\ a_6 & -a_2 & a_1 & 0
\end{pmatrix}.
\end{equation*}
This correlation is a null polarity, see \cite{weiss:vor}. The determinant of this null polarity is calculated by
\begin{equation}\label{determinant}
\det m = (a_1a_4+a_2a_5+a_3a_6)^2={1\over 4}(\mathfrak{aa})^2.
\end{equation}
Thus, the square of a vector is related to the determinant of the corresponding null polarity.
\begin{rem}
Note, that this procedure works also for null vectors, because we do not use the inverse element to determine the sandwich product.
\end{rem}
\noindent With these preliminaries we are now able to prove the following theorem:
\begin{thm}\label{theoremKlein}
Each regular projective transformation, {\it i.e.}, regular correlation or regular collineation can be represented by the product of at most six null polarities.
\end{thm}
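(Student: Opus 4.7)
The plan is to combine the cited correspondence between regular projective transformations of $\mathds{P}^3(\mathds{R})$ and projective automorphisms of the Klein quadric $M_2^4$ with the classical Cartan--Dieudonn\'e theorem applied to the six-dimensional quadratic space underlying $\clifford_{(3,3)}$.

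First, I would use the theorem recalled just above (stating that projective collineations and correlations of $\mathds{P}^3(\mathds{R})$ are precisely the projective automorphisms of $M_2^4$) to translate the problem into one about the automorphism group of the Klein quadric. The group of projective automorphisms of $M_2^4$ is the projective orthogonal group $PO(3,3)$ of the quadratic form $\mathfrak{vv}=2(x_1x_4+x_2x_5+x_3x_6)$, and it is covered by the Pin group $\mathrm{Pin}(3,3)\subset\clifford_{(3,3)}$ through the sandwich representation $\mathfrak{v}\mapsto\alpha(\mathfrak{a})\mathfrak{v}\mathfrak{a}^{\ast}$. Thus every regular projective transformation of $\mathds{P}^3(\mathds{R})$ admits a representative in $\mathrm{Pin}(3,3)$.

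Next I would invoke the Cartan--Dieudonn\'e theorem: every element of the orthogonal group of a non-degenerate real quadratic space of dimension $n$ factors as a product of at most $n$ hyperplane reflections along non-isotropic vectors. Here $n=6$, so every $\mathfrak{a}\in\mathrm{Pin}(3,3)$ admits a decomposition $\mathfrak{a}=\mathfrak{a}_1\mathfrak{a}_2\cdots\mathfrak{a}_k$ with $k\leq 6$ and each $\mathfrak{a}_i$ a non-null vector. Since the sandwich operator respects the geometric product, the induced transformation on $\mathds{P}^5(\mathds{R})$ is a composition of at most six reflections, each of which preserves $M_2^4$.

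Finally I would invoke the explicit computation carried out in the paragraphs preceding the theorem, which identifies the sandwich operator of a single non-null vector $\mathfrak{a}=\sum_i a_i e_i$ with the null polarity of $\mathds{P}^3(\mathds{R})$ whose skew-symmetric $4\times 4$ matrix $m$ is displayed above. Since the passage from $\mathrm{Pin}(3,3)$ to the group of projective transformations of $\mathds{P}^3(\mathds{R})$ is a group homomorphism, the factorisation in $\clifford_{(3,3)}$ descends to a composition of at most six null polarities on $\mathds{P}^3(\mathds{R})$. The main obstacle is purely book-keeping: checking that every projective automorphism of $M_2^4$ really lifts to $\mathrm{Pin}(3,3)$ and that the bound of six survives the projective quotient. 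Both points are absorbed by the homogeneous nature of the setup --- real scalar factors are invisible projectively --- and by the identity $\det m=\tfrac{1}{4}(\mathfrak{aa})^2$ established above, which guarantees that each non-null vector produces a \emph{regular} null polarity, so no separate regularity adjustment is needed.
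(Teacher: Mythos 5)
Your proposal is correct and follows essentially the same route as the paper: identify regular projective transformations of $\mathds{P}^3(\mathds{R})$ with automorphisms of $M_2^4$, lift these to the Pin group of $\clifford_{(3,3)}$, apply Cartan--Dieudonn\'e in the six-dimensional quadratic space to get a factorisation into at most six non-null vectors, and use the preceding computation identifying each such vector's sandwich action with a regular null polarity. Your version is in fact somewhat more explicit than the paper's terse proof, notably in invoking $\det m=\tfrac{1}{4}(\mathfrak{aa})^2$ to confirm regularity of each factor.
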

\begin{proof}
The group of automorphic collineations of $M_2^4$ and projective transformations of $\mathds{P}^3(\mathds{R})$ are isomorphic, see \cite{potwal}. Furthermore, the group of automorphisms of Klein's quadric can be described by the Pin group of $\clifford_{(3,3)}$, see \cite{gunn:kinematics} for the general case. The Clifford algebra model has at most grade six. Furthermore, the $k$-fold product of grade-1 elements, that correspond to null polarities generate the Pin or Spin group depending on whether $k$ is odd or even. To reach the maximum grade of six, we need a product of at least six grade-1 elements. That at most six elements are necessary follows from the Cartan-Dieudonn\'{e} theorem, cf. \cite{garling:cliffordalgebras}.
\end{proof}
\noindent All together we have that the versor group generated by reflections respectively non-null vectors corresponds to the group of regular projective transformations in $\mathds{P}^3(\mathds{R})$ that can be written as the product of null polarities. Furthermore, all transformations that are generated by the product of an even number of vectors are collineations,{\it i.e.}, elements of the Spin group when they are normalized. Transformations that are generated by an odd number of vectors correspond to correlations, and therefore, to the Pin group when they are normalized.
\begin{ex}\label{example1}
As example we present the action of a null polarity applied to a conic on Klein's quadric. Thus, we give three null vectors corresponding to Pl\"ucker coordinates $l_i,\,i=1,2,3$ of three lines
\[ \mathfrak{l}_1= e_1 + 2 e_6,\quad \mathfrak{l}_2= e_2 + 2 e_4,\quad \mathfrak{l}_3= e_3 + 2 e_5.\]
The corresponding two-space in $\mathds{P}^5(R)$ is represented as three-blade by
\[\mathfrak{P}=\mathfrak{l}_1\wedge\mathfrak{l}_2\wedge\mathfrak{l}_3=e_{123}\!+\!2e_{236}\!-\!2e_{134}\!-\!4e_{346}\!+\!2e_{125}\!
+\!4e_{256}\!+\!4e_{145}\!+\!8e_{456}.\]
The outer product null space of this entity can be calculated as
\[\mathds{NO}(\mathfrak{P})=\left\lbrace \alpha \left( \frac{1}{2}e_2+e_4\right) +\beta \left( \frac{1}{2}e_3+e_5\right)  +\gamma \left( \frac{1}{2}e_1+e_6\right)\mid \alpha,\beta,\gamma\in\mathds{R}   \right\rbrace. \]
Indeed, the outer product null space of $\mathfrak{P}$ spans the same two-space as $\left[ \mathfrak{l}_1,\mathfrak{l}_2,\mathfrak{l}_3\right]$.
To get the regulus defined by the three lines $L_i$ or their corresponding null vectors $\mathfrak{l}_i$ we have to find all null vectors in the two-space $\mathfrak{p}=\alpha\mathfrak{l}_1+\beta\mathfrak{l}_2+\gamma\mathfrak{l}_3$. Therefore, we compute
\begin{equation}\label{planecon}
\mathfrak{p}^2=4\beta\alpha+4\gamma\alpha+4\gamma\beta\stackrel{!}{=}0.
\end{equation}
The parameters $(\alpha:\beta:\gamma)$ can be interpreted as homogeneous coordinates in a projective plane $P_1^2$. Eq. \eqref{planecon} defines a conic contained in this plane. Now we apply an arbitrary null polarity given by \[\mathfrak{a}=a_1e_1+a_2e_2+a_3e_3+a_4e_4+a_5e_5+a_6e_6.\]
The outer product null space of the resulting three-blade $\mathfrak{P}'=\alpha(\mathfrak{a})\mathfrak{Pa}^\ast$ can be calculated as the set
\[\mathds{NO}(\mathfrak{P}')=\left\lbrace\alpha\mathfrak{v}_1+\beta\mathfrak{v}_2+\gamma\mathfrak{v}_3\mid \alpha,\beta,\gamma\in\mathds{R} \right\rbrace,\]
where the vectors $\mathfrak{v}_i$ are given by
\begin{align*}
\mathfrak{v}_1 &= \frac{2a_{5}a_{1}\!-\!a_{6}a_{5}\!-\!2a_{6}a_{1}\!+\!4a_{1}^2}{a_{5}a_{4}\!+\!a_{6}a_{4}\!+\!a_{6}a_{5}}e_1+\frac{a_{6}a_{5}\!+\!2a_{5}a_{2}\!+\! 4a_{1}a_{2}\!-\!2a_{4}a_{1}\!+\!a_{6}a_{4}}{a_{5}a_{4}\!+\!a_{6}a_{4}\!+\!a_{6}a_{5}}e_2\\
&+\frac{2a_{5}a_{3}\!-\!a_{5}^2\!+\!4a_{1}a_{3}\!-\!2a_{5}a_{1}}{a_{5}a_{4}\!+\!a_{6}a_{4}\!+\!a_{6}a_{5}}+2e_4,\\
\mathfrak{v}_2 &= \frac{4a_{1}a_{2}\!+\!2a_{6}a_{1}\!-\!a_{6}^2\!-\!2a_{6}a_{2}}{a_{5}a_{4}\!+\!a_{6}a_{4}\!+\!a_{6}a_{5}}e_1 +\frac{2a_{6}a_{2}\!-\!2a_{4}a_{2}\!-\!a_{6}a_{4}\!+\!4a_{2}^2}{a_{5}a_{4}\!+\!a_{6}a_{4}\!+\!a_{6}a_{5}}e_2\\
&+\frac{a_{5}a_{4}\!+\!2a_{6}a_{3}\!+\!a_{6}a_{4}\!+\!4a_{3}a_{2}\!-\!2a_{5}a_{2}}{a_{5}a_{4}\!+\!a_{6}a_{4}\!+\!a_{6}a_{5}}e_3+2e_5,\\
\mathfrak{v}_3 &=\frac{2a_{4}a_{1}\!+\!4a_{1}a_{3}\!+\!a_{6}a_{5}\!-\!2a_{6}a_{3}\!+\!a_{5}a_{4}}{a_{5}a_{4}\!+\!a_{6}a_{4}\!+\!a_{6}a_{5}}e_1+\frac{2a_{4}a_{2}\!-\!2a_{4}a_{3}\!+\!4a_{3}a_{2}\!-\!a_{4}^2}{a_{5}a_{4}\!+\!a_{6}a_{4}\!+\!a_{6}a_{5}}e_2\\
&+ \frac{2a_{4}a_{3}\!-\!a_{5}a_{4}\!+\!4a_{3}^2\!-\!2a_{5}a_{3}}{a_{5}a_{4}\!+\!a_{6}a_{4}\!+\!a_{6}a_{5}}e_3+2e_6.
\end{align*}
We search for null vectors contained in the plane $\mathfrak{p}'(\alpha,\beta,\gamma)=\alpha\mathfrak{v}_1+\beta\mathfrak{v}_2+\gamma\mathfrak{v}_3$. Therefore, we calculate $\mathfrak{p}'\mathfrak{p}'=0$. This leads to the following quadratic equation
\begin{align*}
&((-2a_{4}a_{3}+a_{5}a_{4}+2a_{6}a_{3}+a_{6}a_{4}+2a_{4}a_{2}+8a_{3}a_{2}-2a_{5}a_{2}-a_{4}^2)\beta\\
&+(8a_{1}a_{3}-2a_{6}a_{3}-2a_{5}a_{1}+2a_{4}a_{1}+2a_{5}a_{3}+a_{6}a_{5}-a_{5}^2+a_{5}a_{4})\alpha)
\gamma\\
&+(-2a_{4}a_{1}+2a_{5}a_{2}+2a_{6}a_{1}+a_{6}a_{4}+a_{6}a_{5}+8a_{1}a_{2}-2a_{6}a_{2}-a_{6}^2)\alpha\beta\\
&+(2a_{5}a_{1}-a_{6}a_{5}-2a_{6}a_{1}+4a_{1}^2)+(2a_{4}a_{3}-a_{5}a_{4}+4a_{3}^2-2a_{5}a_{3})
\gamma^2\alpha^2\\
&+(2a_{6}a_{2}-2a_{4}a_{2}-a_{6}a_{4}+4a_{2}^2)\beta^2=0.
\end{align*}
This equation defines a conic in the projective plane $P_1^2$. Every point on this conic delivers the parameters for a null vector that corresponds to a line in $\mathds{P}^3(\mathds{R})$.
\end{ex}
\subsection{Collineations as Spin Group}
We are interested in the relationship between regular projective mappings and elements of the versor group of $\clifford_{(3,3)}$. The general approach (cf.\cite{perwass}) does not work for this model. Therefore, we develop a more line geometric approach by the examination of the action of versors on null three-blades corresponding to two-spaces that are entirely contained in $M_2^4$.
A general element $\mathfrak{g}\in\clifford_{(3,3)}^+$ corresponding to a collineation is given by
\begin{align*}
\mathfrak{g} =&g_{1}e_0\!+\!g_{2}e_{12}\!+\!g_{3}e_{13} \!+\!g_{4}e_{14}\!+\!g_{5}e_{15}\!+\!g_{6}e_{16}\!+\!g_{7}e_{23}
\!+\!g_{8}e_{24}\!+\!g_{9}e_{25}\!+\!g_{10}e_{26}\\
+&g_{11}e_{34}\!+\!g_{12}e_{35}\!+\!g_{13}e_{36}\!+\!g_{14}e_{45}\!+\!g_{15}e_{46}\!+\!g_{16}e_{56}\!+\!g_{17}e_{1234}\!+\!g_{18}e_{1235}\\
+&g_{19}e_{1236}\!+\! g_{20}e_{1245}\!+\!g_{21}e_{1246}\!+\!g_{22}e_{1256} \!+\!g_{23}e_{1345}\!+\!g_{24}e_{1346}\!+\!g_{25}e_{1356}\\
+&g_{26}e_{1456}\!+\!g_{27}e_{2345}\!+\!g_{28}e_{2346}\!+\!g_{29}e_{2356}\!+\!g_{30}e_{2456}\!+\!g_{31}e_{3456}\!+\!g_{32}e_{123456}.
\end{align*}
The conditions that this element is the product of invertible vectors is obtained by
\begin{equation}\label{constraint1}
\alpha(\mathfrak{g})\mathfrak{vg}^\ast\in{\bigwedge}^1 V \mbox{ for all } \mathfrak{v}\in{\bigwedge}^1 V.
\end{equation}
This results in 36 quadratic equations that occur as coefficients of the grade-5 element $e_J$, where $J\subset\lbrace1,2,3,4,5,6\rbrace$ with $|J|=5$.
The 36 quadratic equations defined above define a pseudo algebraic variety in $\mathds{P}^{31}(\mathds{R})$ that can be interpreted as image space of $\mathrm{Spin}_{(3,3)}$, see \cite{klaw}.
Points of $\mathds{P}^3(\mathds{R})$ can be embedded in the homogeneous Clifford algebra as null three-blades that correspond to two-spaces on Klein's quadric. These two-spaces correspond to bundles of lines, {\it i.e.}, all lines concurrent to a point. The point contained by all lines of a bundle can be determined and is described uniquely by the null three-blade. Therefore, we examine the action of an arbitrary versor corresponding to a collineation on a null three-blade corresponding to a bundle of lines. This results in a null three-blade that corresponds to a bundle of lines again. Afterwards we compute the point concurrent to all lines contained by the bundle of lines to get the image of the start point under the transformation.\\

\noindent We start with the bundle of lines concurrent to the a general point\linebreak $P=(y_0,y_1,y_2,y_3)^\mathrm{T}\mathds{R}$. The Pl\"ucker coordinates of three lines containing $P$ are computed as
\[l_1\!=\!(y_0\!:\!0\!:\!0\!:\!0\!:\!y_3\!:\!-y_2),\,\,l_2\!=\!(0\!:\!y_0\!:\!0\!:\!-y_3\!:\!0\!:\!y_1),\,\,l_3\!=\!(0\!:\!0\!:\!y_0\!:\!y_2\!:\!-y_1\!:\!0).\]
The null three-blade corresponding to the two-space spanned by $l_1,l_2$, and $l_3$ is obtained by \begin{align*}
\mathfrak{b}&=(y_0e_1+y_3e_5-y_2e_6) \wedge (y_0e_2-y_3e_4+y_1e_6) \wedge (y_0e_3+y_2e_4-y_1e_5)\\
&=y_0^3e_{123}+y_3y_0^2e_{235}-y_2y_0^2e_{236}+y_3y_0^2e_{134}+y_3^2y_0e_{345}-y_2y_3y_0e_{346}\\
&-y_0^2y_1e_{136}+y_3y_1y_0e_{356}+y_2y_0^2e_{124}+y_2y_3y_0e_{245}-y_2^2y_0e_{246}-y_0y_1y_2e_{146}\\
&-y_0^2y_1e_{125}+y_0y_1y_2e_{256}+y_3y_1y_0e_{145}+y_0y_1^2e_{156}.
\end{align*}
We apply the sandwich operator with the arbitrary element $\mathfrak{g}\in\clifford_{(3,3)}^+$.
The outer product null space of the null three-blade $\mathfrak{b}'=\alpha(\mathfrak{g})\mathfrak{bg}^\ast$ is obtained by
\[\mathds{NO}(\mathfrak{b}')=\left\lbrace \alpha \mathfrak{v}_1+\beta \mathfrak{v}_2+\gamma \mathfrak{v}_3| \alpha,\beta,\gamma\in\mathds{R}\right\rbrace,\]
where
\begin{align*}
\mathfrak{v}_1 &= \big(y_0(g_{1}-g_{32}-g_{20}+g_{4}-g_{29}+g_{13}+g_{9}-g_{24})\\
&+2y_1(g_{17}+g_{7})+2y_2(g_{18}-2g_{3})+2y_3(g_{2}+2g_{19})\big)e_1\\
&-\big(2y_0(g_{31}+g_{14})+2y_1(g_{27}-g_{11})-2y_2(g_{12}y_2+g_{23})\\
&+y_3(g_{9}-g_{1}+g_{32}-g_{29}+g_{4}-g_{13}+g_{20}-g_{24})\big)e_5\\
&-\big(+2y_0(g_{15}-g_{30})+2y_1(g_{8}+g_{28})+2y_3(g_{10}+g_{21})\\
&+y_2(g_{9}-g_{24}-g_{4}+g_{1}-g_{32}+g_{20}+g_{29}-g_{13})\big)e_6,\\
\mathfrak{v}_2 &= \big(y_0(g_{1}-g_{29}+g_{13}+g_{9}-g_{24}-g_{32}-g_{20}+g_{4})\\
&+2y_1(g_{17}+g_{7})+2y_2(g_{18}-g_{3})+2y_3(g_{19}+g_{2})\big)e_2\\
&+\big(2y_0(g_{14}+g_{31})+2y_1(g_{27}-g_{11})-2y_2(g_{12}y_2+g_{23})\\
&+y_3(g_{9}-g_{1}+g_{32}-g_{29}+g_{4}-g_{13}+g_{20}-g_{24})\big)e_4\\
&-\big(2y_0(g_{16}+g_{26})-2y_2(g_{5}+g_{25})+2y_3(g_{22}-g_{6})\\
&+y_1(g_{13}+g_{32}-g_{4}+g_{29}+g_{9}-g_{24}-g_{20}-g_{1})\big)e_6,\\
\mathfrak{v}_3 &= \big(y_0(g_{1}-g_{29}+g_{13}+g_{9}-g_{24}-g_{32}-g_{20}+g_{4})\\
&+2y_1(g_{17}+g_{7})+2y_2(g_{18}-g_{3})+2y_3(g_{2}+g_{19})\big)e_3\\
&+\big(2y_0(g_{15}-g_{30})+2y_1(g_{8}+g_{28})+2y_3(g_{10}+g_{21})\\
&+y_2(g_{1}-g_{24}-g_{4}-g_{32}+g_{9}+g_{20}+g_{29}-g_{13})\big)e_4\\
&+\big(2y_0(g_{16}+g_{26})+2y_3(g_{22}-g_{6})-2y_2(g_{5}+g_{25})\\
&+y_1(g_{13}+g_{29}+g_{32}-g_{4}+g_{9}-g_{24}-g_{20}-g_{1})\big)e_5.
\end{align*}
The null vectors $\mathfrak{v}_1,\mathfrak{v}_2$, and $\mathfrak{v}_3$ span a bundle of lines concurrent to the image of the origin under the collineation corresponding to $\mathfrak{g}$. To get the point we have to intersect two lines of the bundle of lines. Therefore, we change the model and transfer the null vectors corresponding to $\mathfrak{v}_1,\,\mathfrak{v}_2$ to two-blades $\mathfrak{L}_1,\,\mathfrak{L}_2$ of the Grassmann algebra $\mathcal{G}(\mathds{P}^3)$, cf. \cite{gunn:onthehomogeneousmodel}. The representation of a line $L=(p_{01}\!:\!p_{02}\!:\!p_{03}\!:\!p_{23}\!:\!p_{31}\!:\!p_{12})$ in $\mathcal{G}(\mathds{P}^3)$ is 
\begin{equation}\label{transfer}
L=p_{01}e_{12}+p_{02}e_{13}+p_{03}e_{14}+p_{23}e_{34}-p_{31}e_{24}+p_{12}e_{23}.
\end{equation}
Note that due to historic conversion the coefficient of $e_{24}$ is multiplied by $-1$.
To get the intersection point of these two lines we take an arbitrary point $X=x\mathds{R}\in\mathds{P}^3(\mathds{R})$ with representation $\mathfrak{X}=x_0e_1+x_1e_2+x_2e_3+x_3e_4\in{\bigwedge}^1\mathcal{G}(\mathds{P}^3)$ and compute the incidence condition with both lines $\mathfrak{L}_1$ and $\mathfrak{L}_2$:
\[\mathfrak{L}_1 \wedge \mathfrak{X}=0,\quad \mathfrak{L}_2 \wedge \mathfrak{X}=0.\]
This results in a system of linear equations for $x_0,x_1,x_2,x_3$ with solution
\begin{align*}
x_0&=y_0(g_{4}\!+\!g_{9}\!-\!g_{29}\!+\!g_{13}\!-\!g_{20}\!-\!g_{32}\!-\!g_{24}\!+\!g_{1})\!+\!2y_1(g_{17}\!+\!g_{7})\!+\!2y_2(g_{18}\!-\!g_{3})\\
&+2y_3(g_{2}\!+\!g_{19}),\\
x_1&=-\!2y_0(g_{16}\!+\!g_{26})\!+\!y_1(g_{1}\!-\!g_{9}\!-\!g_{13}\!-\!g_{29}\!+\!g_{20}\!+\!g_{4}\!-\!g_{32}\!+\!g_{24})\\
&+2y_2(g_{5}\!+\!g_{25})\!+\!2y_3(g_{6}\!-\!g_{22}),\\
x_2&=2y_0(g_{15}\!-\!g_{30}) \!+\! 2y_1(g_{8}\!+\!g_{28})\\
&+y_2(g_{29}\!-\!g_{24}\!-\!g_{4}\!+\!g_{1}\!+\!g_{9}\!+\!g_{20}\!-\!g_{32}\!-\!g_{13})\!+\!2y_3(g_{10}\!+\!g_{21}),\\
x_3&=-\!2y_0(g_{31}\!+\!g_{14})\!+\! 2y_1(g_{11}\!-\!g_{27})\\
&+2y_2(g_{12}\!+\!g_{23})\!+\!y_3(g_{1}\!-\!g_{32}\!-\!g_{20}\!-\!g_{9}\!+\!g_{13}\!+\!g_{29}\!-\!g_{4}\!+\!g_{24}).
\end{align*}
If we rewrite $X$ as product of a matrix $\mathrm{M}$ with the vector $(y_0,y_1,y_2,y_3)^\mathrm{T}$ we get
\begin{equation}\label{collrep}
(x_0,x_1,x_2,x_3)^\mathrm{T}=\mathrm{M}\cdot(y_0,y_1,y_2,y_3)^\mathrm{T},
\end{equation}
with
\begin{equation*}
\mathrm{M}=\begin{pmatrix}
 k_1 & 2(g_{7}+g_{17}) &  2(g_{18}-g_{3}) & 2(g_{19}+g_{2})\\
-2(g_{26}+g_{16}) & k_2 & 2(g_{5}+g_{25}) & 2(g_{6}-g_{22})\\ 
 2(g_{15}-g_{30}) & 2(g_{8}+g_{28}) &  k_3 &2(g_{21}+2g_{10})\\
-2(g_{31}+g_{14}) & 2(g_{11}-g_{27}) & 2(g_{23}+g_{12}) & k_4
\end{pmatrix},
\end{equation*}
where
\begin{align*}
k_1&=g_{1}-g_{20}-g_{24}-g_{32}-g_{29}+g_{9}+g_{4}+g_{13},\\
k_2&=g_{24}-g_{9}+g_{20}-g_{13}-g_{32}+g_{1}+g_{4}-g_{29},\\
k_3&=g_{1}-g_{13}-g_{32}-g_{4}+g_{29}+g_{9}-g_{24}+g_{20},\\
k_4&=g_{24}+g_{13}+g_{29}+g_{1}-g_{4}-g_{9}-g_{20}-g_{32}.
\end{align*}
Therefore, we have found the correspondence between regular collineations and elements of $\clifford_{(3,3)}^+$. For a given versor $\mathfrak{g}\in\clifford_{(3,3)}^+$  corresponding to a collineation we can determine the representation as $4\times 4$ matrix. If we start with a $4\times 4$ matrix $\mathrm{A}$ with coefficients $(a_{ij}),\,i,j=0,\dots,3$ representing a collineation we have to solve a system of 16 linear and 36 quadratic equations to get the corresponding algebra representation. The linear equations are derived with the help of the matrix representation determined above and result in
\begin{align}\label{systemcollineation}
a_{01}&=2(g_{7}+g_{17}), &&a_{02}=2(g_{18}-g_{3}), &&a_{03}=2(g_{19}+g_{2}),\\
a_{10}&=-2(g_{26}+g_{16}), &&a_{12}=2(g_{5}+g_{25}), &&a_{13}= 2(g_{6}-g_{22}),\nonumber\\
a_{20}&=2(g_{15}-g_{30}), &&a_{21}=2(g_{8}+g_{28}), &&a_{23}=2(g_{21}+2g_{10}),\nonumber\\
a_{30}&=-2(g_{31}+g_{14}), &&a_{31}=2(g_{11}-g_{27}), &&a_{32}=2(g_{23}+g_{12}),\nonumber
\end{align} 
\begin{align*}
a_{00}&=g_{1}-g_{20}-g_{24}-g_{32}-g_{29}+g_{9}+g_{4}+g_{13},\\
a_{11}&=g_{24}-g_{9}+g_{20}-g_{13}-g_{32}+g_{1}+g_{4}-g_{29},\\
a_{22}&=g_{1}-g_{13}-g_{32}-g_{4}+g_{29}+g_{9}-g_{24}+g_{20},\\
a_{33}&=g_{24}+g_{13}+g_{29}+g_{1}-g_{4}-g_{9}-g_{20}-g_{32}.\\
\end{align*}
These 16 equations have to be solved with the constraint equations derived Eq. \eqref{constraint1}. For both cases $\mathfrak{gg}^\ast=1$ and $\mathfrak{gg}^\ast=-1$ the resulting system of 16 linear and 36 quadratic equations possesses an unique solution. It can be solved analytically with the help of a computer algebra system. We give an example:
\begin{ex}
Let $\mathrm{K}\in PGL(\mathds{P}^3(\mathds{R}))$ be given by
\[K=\begin{pmatrix}
1 & 0 & 3 & 0\\1 & 1 & 0 & 1\\1 & 2 & 1 & 0\\1 & 1 & 2 & 1
\end{pmatrix}.\]
To get a versor $\mathfrak{g}\in\clifford_{(3,3)}^+$ corresponding to this collineation we have to solve the system \eqref{systemcollineation}.
\begin{align}\label{system}
&2(g_{7}+g_{17})=0, && 2(g_{18}-g_{3})=3, && 2(g_{19}+g_{2})=0,\\
-&2(g_{26}+g_{16})=1, &&2(g_{5}+g_{25})=0, && 2(g_{6}-g_{22})=1,\nonumber\\
&2(g_{15}-g_{30})=1, &&2(g_{8}+g_{28})=2, &&2(g_{21}+2g_{10})=0,\nonumber\\
-&2(g_{31}+g_{14})=1, && 2(g_{11}-g_{27})=1, &&2(g_{23}+g_{12})=2,\nonumber
\end{align} 
\begin{align*}
&g_{1}-g_{20}-g_{24}-g_{32}-g_{29}+g_{9}+g_{4}+g_{13}=1,\\
&g_{24}-g_{9}+g_{20}-g_{13}-g_{32}+g_{1}+g_{4}-g_{29}=1,\\
&g_{1}-g_{13}-g_{32}-g_{4}+g_{29}+g_{9}-g_{24}+g_{20}=1,\\
&g_{24}+g_{13}+g_{29}+g_{1}-g_{4}-g_{9}-g_{20}-g_{32}=1.\\
\end{align*}
We have two possibilities to guarantee that the resulting versor is in the Spin group, {\it i.e.}, $\mathfrak{gg}^\ast=1$ or $\mathfrak{gg}^\ast=-1$. We compute both solutions and start with the constraint equations implied by Eq. \eqref{constraint1} and $\mathfrak{gg}^\ast=1$.
The corresponding Spin group element has the form:
\begin{align*}
\mathfrak{g_+}={1\over 8\sqrt{2}} \big(&7 e_{0}\! +\!6 e_{12}\!-\!6 e_{13}\! +\! e_{14}\!- \!2 e_{15}\!- \!6 e_{23} \!+\!6 e_{24}\! - \!e_{25}\! -\!2 e_{26}\! +\!2 e_{34}\\
+&6 e_{35}\!-\!5 e_{36}\! -\!4 e_{45}\! +\!2 e_{46}\! +\!6 e_{1234}\! -\! 4 e_{56}\! +\!6 e_{1235}\! -\!6 e_{1236}\\
-&5 e_{1245}\!+\!2 e_{1246}\!-\!4 e_{1256}\! +\!2 e_{1345}\!-\!e_{1346}\! +\!2 e_{1356}\! -\!2 e_{2345}\\
+&2 e_{2346} \!+\! e_{2356}\!-\!2 e_{2456} \!-\! e_{123456}\big).
\end{align*}
If we demand that $\mathfrak{gg}^\ast=-1$ the resulting Spin group element is computed as
\begin{align*}
\mathfrak{g}_-={1\over 8\sqrt{2}}\big(&e_{0}\!-\!6 e_{12}\!-\!6 e_{13}\!-\!e_{14}\!+\!2 e_{15}\!+\!4 e_{16}\!+\!6 e_{23}\!+\!2 e_{24}\!+\!e_{25}\!+\!2 e_{26}\!+\!2 e_{34}\\
+&2 e_{35}\!+\!5 e_{36}\!+\!2 e_{46}\!-\!6 e_{1234}\!+\!6 e_{1235}\!+\!6 e_{1236}\!+\!5 e_{1245}\!-\!2 e_{1246}\\
+&6 e_{1345}\!+\!e_{1346}\!-\!2 e_{1356}\!-\!4 e_{1456}\!-\!2 e_{2345}\!+\!6 e_{2346}\!-\!e_{2356}\\
-&2 e_{2456}\!-\!4 e_{3456}\!-\!7 e_{123456}
\big).
\end{align*}
Both elements $\mathfrak{g}_+$ and $\mathfrak{g}_-$ correspond to the same collineation. With Eq. \eqref{collrep} we get the matrix $\mathrm{K}$ back with the coefficients of $\mathfrak{g}_+$ and $\mathfrak{g}_-$.
\end{ex}
\subsection{Correlations as Pin Group}
We know that grade-$1$ elements respectively their action on null vectors correspond to null polarities. Thus, correlations are elements of $\clifford_{(3,3)}^-$. To transfer a $4\times 4$ matrix representing a correlation into the Clifford algebra we use a similar approach as we used for collineations. An arbitrary field of lines contained in the plane determined by its plane coordinates $X=\mathds{R}(x_0,x_1,x_2,x_3)^\mathrm{T}$ corresponds to the outer product null space of a null three-blade. We apply an arbitrary versor $\mathfrak{h}$ contained in $\clifford_{(3,3)}^-$ to this null three-blade. The image is again a null three-blade whose outer product null space is a bundle of lines. Thus, we compute the point concurrent to all lines of this bundle of lines and describe the action of $\mathfrak{h}$ as product of a matrix with a vector. A general element $\mathfrak{h}\in\clifford_{(3,3)}^-$ is given by
\begin{align*}
\mathfrak{h} &= h_{1}e_{1}\!+\!h_{2}e_{2}\!+\!h_{3}e_{3}\!+\!h_{4}e_{4}\!+\!h_{5}e_{5}\!+\!h_{6}e_{6} \!+\!h_{7}e_{123}\!+\!h_{8}e_{124}\!+\!h_{9}e_{125}\!+\!h_{10}e_{126}\\
&+h_{11}e_{134}\!+\!h_{12}e_{135}\!+\!h_{13}e_{136}\!+\!h_{14}e_{145}\!+\!h_{15}e_{146}\!+\!h_{16}e_{156}\!+\!h_{17}e_{234}\!+\!h_{18}e_{235}\\
&+h_{19}e_{236}\!+\!h_{20}e_{245}\!+\!h_{21}e_{246}\!+\!h_{22}e_{256}\!+\!h_{23}e_{345}\!+\!h_{24}e_{346}\!+\!h_{25}e_{356}\!+\!h_{26}e_{456}\\
&+h_{27}e_{12345}\!+\!h_{28}e_{12346}\!+\!h_{29}e_{12356}\!+\!h_{30}e_{12456}\!+\!h_{31}e_{13456}\!+\!h_{32}e_{23456}.
\end{align*}
If this element shall be a versor we have constraint equations that can be derived from \begin{equation}\label{constraintcorrelation}
\alpha(\mathfrak{h})\mathfrak{vh}^\ast\in{\bigwedge}^1V \mbox{ for all } \mathfrak{v}\in{\bigwedge}^1 V.
\end{equation}
We start with an arbitrary plane given by its plane coordinates\linebreak $Y=\mathds{R}(y_0,y_1,y_2,y_3)^\mathrm{T}$. Pl\"ucker coordinates of three lines contained in this plane can be determined by
\[l_1=(0\!:\!y_3\!:\!-y_2\!:\!y_0\!:\!0\!:\!0),\,\,l_2=(-y_3\!:\!0,y_1\!:\!0\!:\!y_0\!:\!0),\,\,l_3=(y_2\!:\!-y_1\!:\!0\!:\!0\!:\!0\!:\!y_0).\]
The null three-blade corresponding to the plane $\left[ l_1,l_2,l_3\right]$ whose outer product null space is the field of lines defined by $l_1,l_2$ and $l_3$ is computed as
\begin{align*}
\mathfrak{b}&=(y_3 e_2 -y_2 e_3 +y_0 e_4)\wedge(-y_3 e_1 + y_1 e_3 +y_0 e_5)\wedge(y_2 e_1 -y_1 e_2 + y_0 e_6)\\
&= -y_0y_1y_2e_{134}+y_0y_3y_2e_{125}-y_2^2y_0e_{135}+y_0^2y_2e_{145}+y_0y_3y_1e_{124}+y_0y_1^2e_{234}\\
&+y_0y_1y_2e_{235}-y_0^2y_1e_{245}+y_3^2y_0e_{126}-y_0y_3y_2e_{136}+y_0^2y_3e_{146}+y_0y_3y_1e_{236}\\
&-y_0^2y_1e_{346}+y_0^2y_3e_{256}-y_0^2y_2e_{356}+y_0^3e_{456}.
\end{align*}
We apply the sandwich operator with the general element $\mathfrak{h}\in\clifford_{(3,3)}^-$.
Since a versor from the odd part of the algebra corresponds to a correlation the null three-blade $\mathfrak{b}'=\alpha(\mathfrak{h})\mathfrak{bh}^\ast$ corresponds to a bundle of lines. Thus, we can apply the same procedure that we used for collineations to determine the image point, {\it i.e.}, the point concurrent to all lines of the bundle of lines. Therefore, we compute the outer product null space of $\mathfrak{b}'$
\[\mathds{NO}(\mathfrak{b}')=\left\lbrace \alpha \mathfrak{v}_1+\beta \mathfrak{v}_2+\gamma \mathfrak{v}_3| \alpha,\beta,\gamma\in\mathds{R}\right\rbrace,\]
where
\begin{align*}
\mathfrak{v}_1 = &-\big(2y_0h_{7}+y_1(h_{29}-h_{1}-h_{9}-h_{13})+y_2(h_{8}-h_{2}-h_{19}-h_{28})\\
&+y_3(h_{11}+h_{27}+h_{18}-h_{3})\big)e_1\\
&-\big(y_0(h_{3}+h_{11}-h_{27}+h_{18})+y_1(h_{25}+h_{14}+h_{31}-h_{5})\\
&+y_2(h_{4}+h_{32}-h_{24}+h_{20})+2y_3h_{23}\big)e_5\\
&+\big(y_0(h_{2}+h_{8}-h_{19}+h_{28})+y_1(h_{6}+h_{22}-h_{15}+h_{30})-2y_2h_{21}\\
&+y_3(h_{20}-h_{4}-h_{32}-h_{24})\big)e_6,\\
\mathfrak{v}_2 = &-\big(2y_0h_{7}+y_1(h_{29}-h_{1}-h_{9}-h_{13})+y_2(h_{8}-h_{2}-h_{28}-h_{19})\\
&+y_3(h_{27}+h_{18}-h_{3}+h_{11})\big)e_2\\
&+\big(y_0(h_{3}+h_{11}+h_{18}-h_{27})+y_1(h_{25}+h_{14}+h_{31}-h_{5})\\
&+y_2(h_{4}+h_{32}-h_{24}+h_{20})+2y_3h_{23}\big)e_4\\
&-\big(y_0(h_{1}-h_{29}-h_{13}-h_{9})+2h_{16}y_1+y_2(h_{22}-h_{6}-h_{15}-h_{30})\\
&+y_3(h_{25}+h_{5}-h_{31}+h_{14})\big)e_6,\\
\mathfrak{v}_3 = &-\big(2y_0h_{7}+y_1(h_{29}-h_{13}-h_{1}-h_{9})+y_2(h_{8}-h_{2}-h_{28}-h_{19})\\
&+y_3(h_{11}-h_{3}+h_{27}+h_{18})\big)e_3\\
&-\big(y_0(h_{2}+h_{8}-h_{19}+h_{28})+y_1(h_{6}+h_{22}-h_{15}+h_{30})-2y_2h_{21}\\
&+y_3(h_{20}-h_{4}-h_{32}-h_{24})\big)e_4\\
&+\big(y_0(h_{1}-h_{29}-h_{13}-h_{9})+2h_{16}y_1+y_2(h_{22}-h_{6}-h_{15}-h_{30})\\
&+y_3(h_{5}+h_{14}+h_{25}-h_{31})\big)e_5.
\end{align*}
These three null vectors correspond to three points on Klein's quadric that span the bundle of lines defined by the outer product null space of $\mathfrak{b}'$. The point concurrent to these three lines is computed with the help of the exterior algebra $\mathcal{G}(\mathds{P}^3)$. Hence, we transfer the null vectors $\mathfrak{v}_1,\,\mathfrak{v}_2$ to two-blades $\mathfrak{L}_1,\,\mathfrak{L}_2$ of $\mathcal{G}(\mathds{P}^3)$ with Eq. \eqref{transfer}.
The intersection point of these two lines is the image of the plane $Y=\mathds{R}(y_0,y_1,y_2,y_3)^\mathrm{T}$. A general point $X=(x_0,x_1,x_2,x_3)^\mathrm{T}\mathds{R}$ is written as element of the exterior algebra $\mathfrak{X}=x_0e_1+x_1e_2+x_2e_3+x_3e_4\in \mathcal{G}(\mathds{P}^3)$. The point $X$ is incident with the lines $L_1$ and $L_2$ if $\mathfrak{L}_1\wedge \mathfrak{X}=0$ and $\mathfrak{L}_2\wedge \mathfrak{X}=0$.
This results in a system of linear equations for $x_0,x_1,x_2,x_3$ with solution
\begin{align*}
x_0&=-\!2y_0h_{7}\!+\!y_1(h_{1}\!+\!h_{9}\!+\!h_{13}\!-\!h_{29})\!+\!y_2(h_{2}\!+\!h_{19}\!+\!h_{28}\!-\!h_{8})\\
&+\!y_3(h_{3}\!-\!h_{18}\!-\!h_{27}\!-\!h_{11}),\\
x_1&= y_0(h_{13}\!-\!h_{1}\!+\!h_{29}\!+\!h_{9})\!-\!2y_1h_{16}\!+\!y_2(h_{6}\!+\!h_{30}\!+\!h_{15}\!-\!h_{22})\!\\
&+y_3(h_{31}\!-\!h_{25}\!-\!h_{14}\!-\!h_{5}),\\
x_2&= y_0(h_{19}\!-\!h_{2}\!-\!h_{8}\!-\!h_{28})\!+\!y_1(h_{15}\!-\!h_{30}\!-\!h_{6}\!-\!h_{22})\!+\!2h_{21}y_2\!\\
&+y_3(h_{4}\!-\!h_{20}\!+\!h_{32}\!+\!h_{24}),\\
x_3&= y_0(h_{27}\!-\!h_{3}\!-\!h_{11}\!-\!h_{18})\!+\!y_1(h_{5}\!-\!h_{31}\!-\!h_{25}\!-\!h_{14})\\
&+\!y_2(h_{24}\!-\!h_{4}\!-\!h_{20}\!-\!h_{32})\!-\!2h_{23}y_3.
\end{align*}
If we rewrite $X$ as a product of a matrix $\mathrm{C}$ with $Y=(y_0,y_1,y_2,y_3)^\mathrm{T}$ we get:
\begin{equation}\label{correlation}
\begin{pmatrix}
x_0\\x_1\\x_2\\x_3
\end{pmatrix}=\begin{pmatrix}
c_{00} & c_{01} & c_{02} & c_{03}\\
c_{10} & c_{11} & c_{12} & c_{13}\\
c_{20} & c_{21} & c_{22} & c_{23}\\
c_{30} & c_{31} & c_{32} & c_{33}
\end{pmatrix}\begin{pmatrix}
y_0\\y_1\\y_2\\y_3
\end{pmatrix}\mbox{, with}
\end{equation}
\begin{align}
c_{01}&\!=\!h_{1}\!+\!h_{9}\!+\!h_{13}\!-\!h_{29},\!\! && c_{02}\!=\!h_{2}\!+\!h_{19}\!+\!h_{28}\!-\!h_{8},\!\! && c_{03}\!=\!h_{3}\!-\!h_{18}\!-\!h_{27}\!-\!h_{11},\nonumber\\
c_{10}&\!=\!h_{13}\!-\!h_{1}\!+\!h_{29}\!+\!h_{9},\!\! && c_{12}\!=\!h_{6}\!+\!h_{30}\!+\!h_{15}\!-\!h_{22},\!\! && c_{13}\!=\!h_{31}\!-\!h_{25}\!-\!h_{14}\!-\!h_{5},\nonumber\\
c_{20}&\!=\!h_{19}\!-\!h_{2}\!-\!h_{8}\!-\!h_{28},\!\! && c_{21}\!=\!h_{15}\!-\!h_{30}\!-\!h_{6}\!-\!h_{22},\!\! &&  c_{23}\!=\!h_{4}\!-\!h_{20}\!+\!h_{32}\!+\!h_{24},\nonumber\\
c_{30}&\!=\!h_{27}\!-\!h_{3}\!-\!h_{11}\!-\!h_{18},\!\! && c_{31}\!=\!h_{5}\!-\!h_{31}\!-\!h_{25}\!-\!h_{14},\!\! && c_{32}\!=\!h_{24}\!-\!h_{4}\!-\!h_{20}\!-\!h_{32}, \nonumber
\end{align}
and
\[c_{00}=-2h_{7},\quad c_{11}=-2h_{16},\quad  c_{22}=2h_{21},\quad  c_{33}=-2h_{23}.\]
The matrix $\mathrm{C}$ describes the correspondence between regular correlations and elements of $\clifford_{(3,3)}^-$. For a given versor that corresponds to a correlation, we can compute the matrix representation by Eq. \eqref{correlation}. Furthermore, we can determine the versor corresponding to a regular correlation represented by a $4\times 4$ matrix $\mathrm{A}$ if we solve the system of 16 linear equation given by
\begin{align*}
a_{01}&\!=\!h_{1}\!+\!h_{9}\!+\!h_{13}\!-\!h_{29},\!\! &&a_{02}\!=\!h_{2}\!+\!h_{19}\!+\!h_{28}\!-\!h_{8},\!\! &&a_{03}\!=\!h_{3}\!-\!h_{18}\!-\!h_{27}\!-\!h_{11},\\
a_{10}&\!=\!h_{13}\!-\!h_{1}\!+\!h_{29}\!+\!h_{9},\!\!  &&a_{12}\!=\!h_{6}\!+\!h_{30}\!+\!h_{15}\!-\!h_{22},\!\! && a_{13}\!=\!h_{31}\!-\!h_{25}\!-\!h_{14}\!-\!h_{5},\\
a_{20}&\!=\!h_{19}\!-\!h_{2}\!-\!h_{8}\!-\!h_{28},\!\! &&a_{21}\!=\!h_{15}\!-\!h_{30}\!-\!h_{6}\!-\!h_{22},\!\!  && a_{23}\!=\!h_{4}\!-\!h_{20}\!+\!h_{32}\!+\!h_{24},\\
a_{30}&\!=\!h_{27}\!-\!h_{3}\!-\!h_{11}\!-\!h_{18},\!\! &&a_{31}\!=\!h_{5}\!-\!h_{31}\!-\!h_{25}\!-\!h_{14},\!\! && a_{32}\!=\!h_{24}\!-\!h_{4}\!-\!h_{20}\!-\!h_{32},
\end{align*}
\begin{equation}\label{systemcorrelation}
a_{00}=-2h_{7},\quad a_{11}=-2h_{16},\quad a_{22}=2h_{21},\quad a_{33}=-2h_{23}
\end{equation}
with respect to the constraint equations, see Eq. \eqref{constraintcorrelation}. With the help of a computer algebra system it can be verified that this system possesses an unique solution for each of the cases $\mathfrak{hh}^\ast=\pm1$.
\begin{ex}
As example we take again the matrix
\[K=\begin{pmatrix}
1 & 0 & 3 & 0 \\1 & 1 & 0 & 1\\1 & 2 & 1 & 0\\1 & 1 & 2 & 1
\end{pmatrix},\]
but now the matrix describes a correlation. To get the corresponding versor we have to solve the system of linear equations
\begin{align*}
&h_{1}\!+\!h_{9}\!+\!h_{13}\!-\!h_{29}\!=\!0,\!\! &&h_{2}\!+\!h_{19}\!+\!h_{28}\!-\!h_{8}\!=\!3,\!\! &&h_{3}\!-\!h_{18}\!-\!h_{27}\!-\!h_{11}\!=0,\\
&h_{13}\!-\!h_{1}\!+\!h_{29}\!+\!h_{9}\!=\!1,\!\!  &&h_{6}\!+\!h_{30}\!+\!h_{15}\!-\!h_{22}\!=\!0,\!\! && h_{31}\!-\!h_{25}\!-\!h_{14}\!-\!h_{5}\!=\!1,\\
&h_{19}\!-\!h_{2}\!-\!h_{8}\!-\!h_{28}\!=1,  &&h_{15}\!-\!h_{30}\!-\!h_{6}\!-\!h_{22}\!=\!2,  && h_{4}\!-\!h_{20}\!+\!h_{32}\!+\!h_{24}\!=\!0,\\
&h_{27}\!-\!h_{3}\!-\!h_{11}\!-\!h_{18}\!=\!1,\!\! &&h_{5}\!-\!h_{31}\!-\!h_{25}\!-\!h_{14}\!=\!1,\!\! && h_{24}\!-\!h_{4}\!-\!h_{20}\!-\!h_{32}=2,
\end{align*}
\[-2h_{7}=1,\quad -2h_{16}=1,\quad 2h_{21}=1,\quad -2h_{23}=1.\]
If we solve this system of linear equations with respect to the constraints given by Eq. \eqref{constraintcorrelation} and $\mathfrak{hh}^\ast=1$ the resulting versor is given by
\begin{align*}
\mathfrak{h}_+={1\over 8}\big(&e_{1}\!+\!e_{2}\!+\!e_{3}\!+\!2 e_{4}\!-\!4 e_{6}\!-\!2 e_{123}\!-\!3 e_{124}\!+\!e_{125}\!+\!4 e_{126}\!-\!e_{134}\!+\!e_{136}\\
-&2 e_{145}\!-\!2 e_{146}\!-\!2 e_{156}\!-\!2 e_{234}\!-\!e_{235}\!+\!5 e_{236}\!+\!2 e_{246}\!-\!4 e_{245}\!-\!2 e_{345}\\
-&\!6 e_{256}\!+\!4 e_{456}\!-\!2 e_{356}\!+\!3 e_{12345}\!+\!3 e_{12346}\!+\!3 e_{12356}\!-\!6 e_{23456}
\big).
\end{align*}
For $\mathfrak{hh}^\ast=-1$ we get
\begin{align*}
\mathfrak{h}_-={1 \over 8} \big( -&3 e_{1}\!+\!3 e_{2}\!-\!3 e_{3}\!-\!6 e_{4}\!-\!2 e_{123}\!-\!5 e_{124}\!+\!e_{125}\!-\!4 e_{126}\!-\!e_{134}\!+\!e_{136}\\
-&2 e_{145}\!+\!6 e_{146}\!-\!2 e_{156}\!+\!2 e_{234}\!-\!e_{235}\!+\!3 e_{236}\!+\!2 e_{246}\!-\!2 e_{345}\!+\!2 e_{256}\\
+&4 e_{346}\!-\!4 e_{456}\!-\!2 e_{356}\!-\!e_{12345}\!+\!e_{12346}\!-\!e_{12356}\!-\!4 e_{12456}\!+\!2 e_{23456}
\big).
\end{align*}
Note that both versors $\mathfrak{h}_+$ and $\mathfrak{h}_-$ correspond to the same correlation. The correlation can be computed with Eq. \eqref{correlation}.
\end{ex}
\begin{rem}
With the knowledge which elements correspond to collineations and which correspond to correlations we can apply a transformation to a pencil of lines to study the action of a mapping on a point. Since a general projective transformation maps pencils of lines to pencils of lines, we could not distinguish between collineations and correlations. Therefore, we used bundles of lines and fields of lines to study the action of a general projective transformation represented as element of $\clifford_{(3,3)}$.
\end{rem}
\subsection{Singular projective Transformations}
Now we examine singular projective transformations. Every versor can be written as the geometric product of grade-$1$ elements corresponding to null polarities. The square of a grade-1 element (see Eq. \eqref{square}) is related to the determinant of the corresponding null polarity, see Eq. \eqref{determinant}. Thus, we assume that one of these null polarities is a singular one, and therefore, its determinant is equal to zero. We define:
\begin{defn}
An element $\mathfrak{g}\in\clifford_{(3,3)}$ with
$\mathfrak{g}=\mathfrak{v}_1\dots\mathfrak{v}_k$
with $k\leq 6$ and $\mathfrak{v}_i\in\bigwedge^1 V$ is called a \emph{null versor} if at least one $\mathfrak{v}_i$ squares to zero.
\end{defn}
\noindent Constraint equations can be derived from
\[\alpha(\mathfrak{g})\mathfrak{vg}^\ast\in{\bigwedge}^1 V \mbox{ for all } \mathfrak{v}\in{\bigwedge}^1 V,\quad  \mathfrak{g}\in\clifford_{(3,3)}^+\]
for a singular collineation and
\[\alpha(\mathfrak{h})\mathfrak{vh}^\ast\in{\bigwedge}^1 V \mbox{ for all } \mathfrak{v}\in{\bigwedge}^1 V, \quad  \mathfrak{h}\in\clifford_{(3,3)}^-\]
for a singular correlation. The matrix representations for general collineations \eqref{collrep} and correlations \eqref{correlation} are also valid for the singular case. The system of linear equations \eqref{systemcollineation} with the constraint equations implied by $\alpha(\mathfrak{g})\mathfrak{vg}^\ast\in{\bigwedge}^1 V \mbox{ for all } \mathfrak{v}\in{\bigwedge}^1 V$ with $\mathfrak{g}\in\clifford_{(3,3)}^+$ can not be solved for a singular collineation. For a singular correlation the system of linear equations \eqref{systemcorrelation} with the constraint equations implied by $\alpha(\mathfrak{h})\mathfrak{vh}^\ast\in{\bigwedge}^1 V \mbox{ for all } \mathfrak{v}\in{\bigwedge}^1 V$ with $\mathfrak{h}\in\clifford_{(3,3)}^-$ has also no solution. Nevertheless, we are able to write singular correlations or collineations as null versors if we know the null polarities that generate them. These null polarities can be transferred to vectors in the homogeneous Clifford algebra model, and therefore, the whole correlation or collineation can be transferred to the algebra model.

\section{Conclusion}
We presented a method to transfer general projective transformations acting in three-dimensional projective space to elements of the homogeneous Clifford algebra model $\clifford_{(3,3)}$. All entities known from line geometry occur naturally in this model and can be transformed projectively by the application of the sandwich operator. Furthermore, we showed that the sandwich action of non-null vectors corresponds to regular null polarities, {\it i.e.}, correlations that are involutions as the basic elements building up the group of regular projective transformations. Moreover, the sandwich action of null vectors corresponds to singular null polarities.


\subsection*{Acknowledgment}
This work was supported by the research project "Line Geometry for Lightweight Structures", funded by the DFG (German Research Foundation) as part of the SPP 1542.


\begin{thebibliography}{10}
\bibitem{blaschke:kinematikundquaternionen}
   Blaschke, W.: \textit{Kinematik und Quaternion}.
   VEB Deutscher Verlag der Wissenschaften, Berlin,\ 1960.
\bibitem{gallier:cliffordalgebrascliffordgroups}
   Gallier, J.: \textit{Clifford Algebras, Clifford Groups, and a Generalization of the Quaternions: The Pin and Spin Groups}.
    2008, arxiv.org, available at \url{http://arxiv.org/abs/0805.0311}.
\bibitem{garling:cliffordalgebras}
   Garling, D.J.H.: \textit{Clifford Algebras: An Introduction}.
   Camb. Uni. Press,\ 2011.
\bibitem{giering:vorlesungenueberhoeheregeometrie}
   Giering, O.: \textit{Vorlesungen \"uber h\"ohere Geometrie}.
   Vieweg,\ 1982.
\bibitem{gunn:kinematics}
   Gunn, C.: \textit{Geometry, Kinematics, and Rigid Body Mechanics in Cayley-Klein Geometries}.
    PhD Thesis, TU Berlin,\ 2011.   
\bibitem{gunn:onthehomogeneousmodel}
   Gunn, C.: \textit{On the Homogeneous Model of Euclidean Geometry}.
   In: Guide to Geometric Algebra in Practice, ch. 15, 297--327, Springer, 2011.
\bibitem{klaw}{Klawitter, D. \and Hagemann, M.:} \textit{Kinematic mappings for Cayley-Klein geometries via Clifford algebras.} Beitr\"age zur Algebra und Geometrie / Contributions to Algebra and Geometry \textbf{54} (2013), 737--761.
\bibitem{hongbo}
   Li, H. and Zhang, L.: \textit{Line Geometry in Terms of the Null Geometric Algebra over {$R^{3,3}$}, and Application to the Inverse Singularity Analysis of Generalized {Stewart} Platforms}. Guide to Geometric Algebra in Practice, 253--272, Springer, 2011.
\bibitem{onishchik}
  Onishchik, A.L. \and Sulanke, R.: \textit{Projective and Cayley-Klein geometries}.
   Springer, Berlin,\ 2006.
\bibitem{perwass}
   Perwass, C.B.U. \textit{Geometric Algebra with Applications in Engineering} Springer, Berlin and Heidelber, 2009.
\bibitem{porteous:cliffordalgebrasandtheclassicalgroups}
  Porteous, Ian R.: \textit{Clifford Algebras and the Classical Groups}.
   Cambridge University Press,\ 1995.
\bibitem{potwal}
    Pottmann, H \and Wallner, J. \textit{Computational Line Geometry} Springer, Berlin and New York, 2001
\bibitem{weiss:vor}
      Weiss, G.: \textit{Vorlesungen aus Liniengeometrie} Unpublished manuscript, Institute of Geometry, Vienna University of Technology.
\bibitem{weiss}
   Weiss, G.: \textit{Zur euklidischen Liniengeometrie I-IV} Sitzungsberichte der \"Osterreichischen Akademie der Wissenschaften, 1978-1982.
\end{thebibliography}
\end{document}